\newtheorem{theorem}{Theorem}[section]
\newtheorem{lemma}[theorem]{Lemma}
\theoremstyle{definition}
\newtheorem{definition}[theorem]{Definition}
\newtheorem{example}[theorem]{Example}
\theoremstyle{remark}
\newtheorem{remark}[theorem]{Remark}
\newtheorem{corollary}[theorem]{Corollary}
\def\theequation{\@arabic{\c@section}.\@arabic{\c@equation}}
\journal{}
\begin{document}

\begin{frontmatter}


\title{Bivariate functions  of bounded variation: Fractal dimension and fractional integral}
\author{S. Verma}\author{P. Viswanathan\corref{cor1}}
\cortext[cor1]{Corresponding author\; E-mail address: viswa@maths.iitd.ac.in}
\address{Department of Mathematics\\Indian Institute of Technology Delhi \\New Delhi,  India 110016.}
\begin{abstract}

In contrast to the univariate case,  several definitions are available for the notion of bounded variation for a bivariate function. This article is an attempt to study the Hausdorff dimension and  box dimension of  the graph of a continuous function  defined on a rectangular region in  $\mathbb{R}^2$, which is of bounded variation according to some of these approaches.  We show also that the Riemann-Liouville fractional integral of a function of bounded variation in the sense of Arzel\'a is of bounded variation in the same sense. Further, we deduce the Hausdorff dimension and box dimension of the graph of the fractional integral of a bivariate continuous function of bounded variation.

\end{abstract}
\begin{keyword}
 Bounded variation of bivariate function \sep  Box dimension \sep Hausdorff  dimension \sep Riemann-Liouville fractional integral.

\MSC  28A80 \sep 28A78 \sep 26A33 \sep 26A45

\end{keyword}
\end{frontmatter}
\section{Introduction}\label{BBVsec1}

This paper
is primarily concerned with the concept of bounded variation of a  bivariate function. The notion of  \emph{bounded variation} was originally introduced by Jordan \citep{Jordan} for a real-valued function on a closed bounded interval in $\mathbb{R}$.
The concept of bounded variation stimulated interest  because of
its  properties such as additivity, decomposability into monotone functions,
continuity, differentiability, measurability and integrability. The functions of bounded
variation, for instance,  plays a major role in the study of rectifiable
curves, Fourier series,
integrals and calculus of variations.

\par The motivation for the current work is multifold. The first is the theory of bivariate function of bounded variation, which enjoys interesting connections
with various branches of pure and applied mathematics.  There is no unique suitable way to extend the
notion of variation to a function of more than one variable.
Various approaches to the notion of bounded variation of a multivariate function target to identify  a class of functions having similar properties
 as that of a univariate function of bounded variation. Of the several approaches to the concept of bounded variation for functions of several variables, popular versions are attributed to  Vitali, Hardy, Arzel\'a, Pierpont, Fr\'echet, Tonelli and Hahn.   The reader may refer \citep{James, Raymond, Raymond2} for a comprehensive collection of these seven variants of bounded variation. In fact, new definitions and approaches continue to be introduced for various applications. For more recent generalizations for the concept of total variation of a function, the interested reader may consult  \citep{APST, BCGT, Cas, Ex, VVC1, VVC2, VVC3} and references quoted therein.

\par Among establishing various properties of a function of bounded variation, calculation of fractal dimension of its graph has gained interest in fractal geometry and related fields. In fractal approximation theory, the Hausdorff dimension and box dimension constitute important
quantifiers that need to agree between the constructed approximants and the object
being approximated. For definitions and basic results on various approaches to the notion of fractal dimension, the reader is referred to the popular textbook by Falconer \citep{Fal}. Using the fact that a univariate function of bounded variation can have at most a countable number of discontinuous points and some basic properties of the Hausdorff dimension, it is easy to prove that the Hausdorff dimension of the graph of a univariate function of bounded variation on $[a, b]$ is  $1$, see, for instance, \citep{Fal}.  Supplementing this, recently, Liang proved an elementary and elegant result that the box dimension of the graph of a univariate continuous function of bounded variation is $1$ (See Theorem 1.3, \citep{Liang}). This result acts as the second motivating influence for our work herein. To be precise,  the aforementioned theorem in reference \citep{Liang} stimulated  to ask if an analogous  result for a bivariate function of bounded variation exists. Section \ref{MS1} seeks to show that this is indeed the case, in fact with a suitable interpretation for the notion of bounded variation. For instance, among others, we prove:
\begin{theorem}
If $f :[a,b] \times [c,d] \rightarrow \mathbb{R}$ is continuous and of bounded variation in the sense of Hahn, then the Hausdorff dimension and box dimension of its graph is $2$.
\end{theorem}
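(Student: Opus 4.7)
I plan to sandwich both dimensions between $2$ and $2$ via the chain
$$2 \;\leq\; \dim_H(G_f) \;\leq\; \underline{\dim}_B(G_f) \;\leq\; \overline{\dim}_B(G_f) \;\leq\; 2,$$
where $G_f := \{(x,y,f(x,y)) : (x,y) \in [a,b]\times[c,d]\}$ is the graph. The left-most inequality is essentially free: the coordinate projection $\pi:(x,y,z)\mapsto(x,y)$ is $1$-Lipschitz and satisfies $\pi(G_f)=[a,b]\times[c,d]$, a set of Hausdorff and box dimension $2$, and Lipschitz maps do not increase either dimension. The three middle inequalities are standard. Hence it suffices to establish the rightmost estimate $\overline{\dim}_B(G_f)\leq 2$.

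For that I would adapt the partition strategy of Liang's univariate Theorem 1.3. For each $N\in\mathbb{N}$, partition $[a,b]\times[c,d]$ into $N^2$ closed subrectangles $R_{ij}$ of sides $h_x=(b-a)/N$ and $h_y=(d-c)/N$, and set $\delta:=\max(h_x,h_y)\asymp 1/N$. The portion of $G_f$ lying above $R_{ij}$ sits inside a prism of base-side $\delta$ and vertical extent equal to the oscillation $\mathrm{osc}(f,R_{ij}):=\sup_{R_{ij}}f-\inf_{R_{ij}}f$, so it can be covered by at most $\lceil \mathrm{osc}(f,R_{ij})/\delta\rceil+1$ cubes of side $\delta$. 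Summing over $i,j$ yields
$$N_\delta(G_f)\;\leq\;\frac{1}{\delta}\sum_{i,j=1}^{N}\mathrm{osc}(f,R_{ij})\;+\;2N^2.$$

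The crucial step---and the main obstacle---is to show that $\sum_{i,j=1}^{N}\mathrm{osc}(f,R_{ij})\leq C\,N$ with $C$ independent of $N$. I would bound $\mathrm{osc}(f,R_{ij})$ by a horizontal piece (the one-dimensional variation of $f(\cdot,y)$ on $[x_{j-1},x_j]$ for a judiciously chosen $y$ in the $i$-th strip) plus a vertical piece (the variation of $f(x,\cdot)$ on $[y_{i-1},y_i]$ for some $x$ in the $j$-th strip), obtained by routing the two extremal points through a common corner of $R_{ij}$. Summing the horizontal contributions along each row and the vertical contributions down each column, and then invoking the uniform one-dimensional control on sections furnished by the Hahn hypothesis (presumably isolated as a lemma earlier in the section), should deliver the desired $O(N)$ bound. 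This hinges on the precise formulation of Hahn's variation; Vitali-type control alone would not suffice, so the argument depends essentially on the fact that Hahn BV implies uniformly bounded row/column variations. Once that is in hand, the displayed inequality gives $N_\delta(G_f)=O(N^2)$ while $\delta\asymp 1/N$, so $\overline{\dim}_B(G_f)\leq 2$, and the sandwich closes.
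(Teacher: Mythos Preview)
Your overall sandwich strategy and the cube-counting estimate
\[
N_\delta(G_f)\;\leq\;\frac{1}{\delta}\sum_{i,j=1}^{N}\mathrm{osc}(f,R_{ij})\;+\;2N^2
\]
are exactly what the paper does (Lemma~\ref{BBVHDL1} for the lower bound, Lemma~\ref{BBVL2} for the upper bound). The divergence is at what you call ``the crucial step---and the main obstacle,'' namely showing $\sum_{i,j}\mathrm{osc}(f,R_{ij})\le CN$.

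This is not an obstacle at all: it is the \emph{definition} of bounded variation in the sense of Hahn. Hahn's condition says precisely that for the uniform $m\times m$ grid the sum $\sum_{r=1}^{m^2}\omega_r/m$ is bounded independently of $m$, i.e.\ $\sum_r \omega_r \le Km$. Equivalently, Pierpont's condition (which the paper records as equivalent to Hahn's) says $\sum_r D\,\omega_r$ is bounded, where $D$ is the side length; with $D\asymp\delta\asymp 1/N$ this is again $\sum_r\omega_r=O(N)$. The paper simply plugs this in to obtain $N_\delta(G_f)=O(\delta^{-2})$ and concludes.

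Your proposed route---splitting each oscillation into a horizontal and a vertical section-variation piece and then invoking ``uniform one-dimensional control on sections furnished by the Hahn hypothesis''---rests on a misreading of the Hahn condition. Hahn BV says nothing about one-dimensional sections; that kind of control belongs to Hardy's or Tonelli's definitions. In particular, Hahn BV does not assert uniformly bounded row/column variations, and the implication Hahn $\Rightarrow$ Tonelli (for continuous $f$) only yields \emph{integrability} of the section-variation functions, not a uniform bound. So the lemma you are ``presuming'' does not exist in the paper, and your section-based argument would require independent justification that is neither provided nor obviously available. Once you recognise that the Hahn definition already hands you $\sum_r\omega_r=O(N)$, the proof collapses to a one-line substitution.
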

As a prelude to this, we need a bivariate analogue of a well-known proposition (See Proposition 11.1,  \citep{Fal}), which is applied to find the bounds for the box dimension of the graph of a univariate continuous function. Although this is a fundamental and natural extension,  we did not find explicitly anywhere in the literature, for which reason we record it in Section \ref{MS1}. Let us note that while univariate functions of bounded variation are relatively easy to dealt with, the multivariate theory is intricate with roots in geometric measure theory. However, our exposition has a different goal, that is, to apply some elementary techniques to study the dimension of the graph of a bivariate function of bounded variation.

\par
Fractional calculus, which can be broadly interpreted as  the theory of derivatives and integrals of fractional (non-integer) order and their diverse applications,  is an older subject dating back nearly 300 years. The literature relevant to fractional calculus is substantial; for a selection,  the reader can refer to an encyclopedic book \citep{Samko}.
Perhaps due mostly to linguistic reasons, there have been efforts
to relate the two apparently diverse areas - fractional calculus and fractal
geometry. Apart from the linguistic reason, researches to connect fractional calculus with fractals were motivated by the need for physical  and geometric interpretations of the fractional order integration and differentiation \citep{Podlubny,Ruan}. In this regard, in \citep{Liang} it has been deduced that the box dimension of  the graph of the (mixed) Riemann-Liouville fractional integral of a continuous function of bounded variation is $1$. Motivated by this, the last section of the current article establishes the Hausdorff dimension and  box dimension of the graph of the Riemann-Liouville fractional integral of a bivariate continuous function of bounded variation.


\section{Background and Preliminaries} \label{BP}
 This section is to set out the background for the current study.

\subsection{Bounded variation in bivariate function}
We recall some preliminary notions and results on bounded variation of a bivariate function which are needed in the sequel; for details, please refer to \citep{James,Raymond}.\\
 Let $f :[a,b] \times [c,d] \rightarrow \mathbb{R}.$  A set of parallels to the axes:
 $$ x= x_i (i=0,1,2,\dots ,m), \quad a=x_0 <x_1 <\dots <x_m=b;$$
 $$ y= y_j (j=0,1,2,\dots ,n),\quad  c=y_0 < y_1 < \dots < y_n=d$$ will be referred to as a \emph{net}. A net partitions $[a,b] \times [c,d]$ into smaller rectangles called  \emph{cells}. Following \citep{James}, the difference operators $ \triangle ,\triangle_{10},\triangle_{01}$ and $\triangle_{11},$ when applied to $f(x_i,y_j),$ are assigned the following meaning:
 $$ \triangle f(x_i,y_i)=f(x_{i+1},y_{i+1})-f(x_i,y_i),$$
 $$ \triangle_{10} f(x_i,y_j)=f(x_{i+1},y_j)-f(x_i,y_j),$$
 $$ \triangle_{01} f(x_i,y_j)=f(x_i,y_{j+1})-f(x_i,y_j),$$
 $$ \triangle_{11} f(x_i,y_j)=\triangle_{10}(\triangle_{01} f(x_i,y_j))=f(x_{i+1},y_{j+1})-f(x_{i+1},y_j)-f(x_i,y_{j+1})+f(x_i,y_j).$$
   Each of these operators  applied to $f(x,y)$ will have a similar interpretation, wherein the increments of $x$ and $y$ involved are greater than zero but otherwise arbitrary.
 \begin{definition}
Let $f :[a,b] \times [c,d] \rightarrow \mathbb{R}.$ We define  the total variation function $\phi(\overline{x})$  as the total variation of $f(\overline{x},y)$ treated  as a function of $y$ alone in the interval $(c,d)$. Further,  $\phi(\overline{x})=+\infty $ if $f(\overline{x},y)$ is of unbounded variation. Similarly, the total variation function $\mu (\overline{y})$ is the total variation of $f(x, \overline{y})$ considered as a function of $x$ alone in the interval $(a,b)$, or as $ +\infty $ if $f(x, \overline{y})$ is of unbounded variation.
\end{definition}

\begin{definition}
 (Vitali-Lebesgue-Fr\'echet-de la Vall\'ee Poussin) \citep{James}.
A function $f: [a,b] \times [c,d] \rightarrow \mathbb{R}$ is said to be of bounded variation in the Vitali sense if for all nets, the sum $$ \sum_{i=0,j=0}^{m-1,n-1} | \triangle_{11} f(x_i,y_j)|$$ is bounded.
\end{definition}

 \begin{definition}
  (Fr\'echet) \citep{James}.
   A function $f:[a,b] \times [c,d] \rightarrow \mathbb{R}$ is of bounded variation in the Fr\'echet sense if the sum $$ \sum_{i=0,j=0}^{m-1,n-1} \epsilon_i \overline{\epsilon}_j \triangle_{11} f(x_i,y_j) $$ is bounded for all possible nets and for all choices of $\epsilon_i = \pm1$ and $\overline{\epsilon}_j= \pm1.$
  \end{definition}

 \begin{definition}
 (Hardy-Krause) \citep{James}.
   A function $f: [a,b] \times [c,d] \rightarrow \mathbb{R}$ is said to be of bounded variation  in  the Hardy sense if it satisfies the following conditions.
   \begin{enumerate}[(i)]
   \item the same condition as that of the bounded variation in the Vitali sense

   \item for at least one fixed $\overline{x}$, the function $f(\overline{x},y)$ is of bounded variation in $y$ and for at least one $\overline{y}$, the function $ f(x, \overline{y})$ is of bounded variation in $x$.
\end{enumerate}
   \end{definition}

\begin{definition}
    (Arzel\'a) \citep{James}.
    Let $(x_i,y_j) ~~(i=0,1,2, \dots ,m; j=0,1,2,\dots,m)$ be any set of points satisfying the conditions $$ a=x_0 \le x_1 \le x_2 \le \dots \le x_m=b;$$
    $$ c =y_0 \le y_1 \le y_2 \le \dots \le y_m=d.$$
     The function $f: [a,b] \times [c,d] \rightarrow \mathbb{R}$ is said to be of bounded variation  in the Arzel\'a sense if the sum $$ \sum_{i=0}^{m-1} | \triangle f(x_i,y_i)|$$ is bounded for all such sets of points.
    \end{definition}

 \begin{definition}
 (Pierpont) \citep{James}.
   Consider a square net which covers the whole plane and has its lines parallel to the respective axes. Denote the side of each square  by $D$. No line of the net need to be coinciding with a side of the rectangle $ [a,b] \times [c,d].$ Then a finite number of the cells of the net will contain points of $[a,b] \times [c,d]$. Let us denote the oscillation of $f$ in the $r$-th  cell, regarded as a closed region by $ \omega_r$. A function $f:[a,b] \times [c,d] \rightarrow \mathbb{R}$ is said to be of bounded variation in the Pierpont sense if the sum $$ \sum_{r} D\omega_r$$ is bounded for all such nets in which $D$ is less than some fixed constant.
\end{definition}

  \begin{definition}
  (Hahn) \citep{James}.
    Consider any net in which we have $m=n$, $x_{i+1}-x_i = \frac{b-a}{m}$, and $y_{i+1}-y_i=\frac{d-c}{m}$ ($i=0,1,2,\dots , m-1$). Then there are $m^2$ congruent rectangular cells. The function $f:[a,b] \times [c,d] \rightarrow \mathbb{R}$ is said to be of bounded variation in the sense of Hahn if the sum $$ \sum_{i=1}^ {m^2} \frac{\omega_r}{m}$$ is bounded for all $m.$
\end{definition}

\begin{theorem} (\citep{James}, item (7), p. 835).
  A function  $f:[a,b] \times [c,d] \rightarrow \mathbb{R}$ is of bounded variation in the Pierpoint sense if and only if it is of bounded variation in the Hahn sense.
   \end{theorem}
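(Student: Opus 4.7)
Write the Pierpont sum as $S_P(D) := \sum_r D\,\omega_r^P$ and the Hahn sum as $S_H(m) := \frac{1}{m}\sum_r \omega_r^H$. The plan is to show that the two are mutually bounded by constant multiples of one another, using a suitable choice of $D$ depending on $m$ in one direction and of $m$ depending on $D$ in the other, followed by elementary counting. The single lemma I need is the subadditivity of oscillation along chains of overlapping sets: if $f: S \to \mathbb{R}$ and $S = \bigcup_{j=1}^{\ell} Q_j$ with the intersection graph of $\{Q_j\}$ (edges when $Q_i \cap Q_j \neq \emptyset$) connected, then $\omega(f,S) \le \sum_{j=1}^{\ell} \omega(f, Q_j)$. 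This follows by induction from the two-set case $\omega(f, A \cup B) \le \omega(f, A) + \omega(f, B)$ for $A \cap B \neq \emptyset$, which is immediate by inserting $f(p)$ for a common point $p$ into $\sup_{A \cup B} f - \inf_{A \cup B} f$.

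For the implication Hahn $\Rightarrow$ Pierpont, I fix an arbitrary Pierpont square net of side $D$ less than some small constant and set $m := \lfloor (b-a)/D \rfloor$, assuming without loss of generality $b-a \le d-c$. Both Hahn cell dimensions $(b-a)/m$ and $(d-c)/m$ then lie between $D$ and a fixed constant multiple of $D$ (the constant depending only on the aspect ratio $(d-c)/(b-a)$). Hence each Pierpont square meets at most a $2 \times 2$ block of adjacent Hahn cells, and the sets $Q_k := H_k \cap P_r$ cover $P_r \cap ([a,b]\times[c,d])$ while sharing the common corner of the block, so the intersection graph is connected. The lemma yields $\omega_r^P \le \sum_{H_k \cap P_r \neq \emptyset}\omega_{H_k}$, and interchanging sums (each Hahn cell is counted at most $C = C(a,b,c,d)$ times, namely the number of $D$-squares meeting it) gives
\begin{equation*}
S_P(D) \;\le\; C\,D\sum_k \omega_{H_k} \;=\; C\,Dm\, S_H(m) \;\le\; C(b-a)\,M,
\end{equation*}
bounded uniformly in $D$ by the Hahn hypothesis.

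For the converse Pierpont $\Rightarrow$ Hahn, I fix $m$ and choose a Pierpont net with $D := \tfrac{1}{2}\min\{(b-a)/m,\,(d-c)/m\}$, so every Pierpont square is strictly smaller than every Hahn cell in both directions and meets at most four Hahn cells. For each Hahn cell $H_k$, the Pierpont squares $P_j$ intersecting $H_k$ cover it, and since $H_k$ is a connected rectangle the sets $\{P_j \cap H_k\}$ have connected intersection graph (any two points in $H_k$ are joined by a path in $H_k$, which crosses the Pierpont grid through shared boundary points). The lemma gives $\omega_{H_k} \le \sum_{P_j \cap H_k \neq \emptyset}\omega_{P_j}$; swapping orders and using that each $P_j$ is counted at most four times,
\begin{equation*}
S_H(m) \;\le\; \frac{4}{m}\sum_j \omega_{P_j} \;=\; \frac{4}{Dm}\,S_P(D) \;=\; \frac{8}{\min(b-a,d-c)}\,S_P(D),
\end{equation*}
which is bounded for all large $m$ by the Pierpont hypothesis; the finitely many small $m$ are trivially finite sums.

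The main obstacle will be pinning down the subadditivity lemma and verifying in each direction that the chain of cells whose oscillations we sum really has a connected intersection graph inside the region of interest; this ultimately reduces to path-connectedness of rectangles and the fact that the finer grid refines the coarser one up to a bounded overlap count. Once this geometric book-keeping is in place, both implications follow by the counting estimates above, with constants depending only on $a,b,c,d$.
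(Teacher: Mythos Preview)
The paper does not prove this statement: it is quoted in the preliminaries section as a result from Clarkson and Adams (item~(7), p.~835 of \citep{James}) and invoked without argument. So there is no proof in the paper to compare yours against; you have in effect supplied a proof of a classical equivalence that the authors take for granted.

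Your argument is correct. The oscillation--subadditivity lemma over a chain of overlapping sets is the right tool, and the two grid comparisons are set up properly: in each direction you choose the mesh of one net so that its cells are comparable in size to those of the other, and the bounded-overlap counting then converts one sum into a constant multiple of the other. Two small points are worth tightening. In the Hahn $\Rightarrow$ Pierpont step your claim that the sets $Q_k = H_k \cap P_r$ ``share the common corner of the block'' is only literally true in the full $2\times 2$ case; a cleaner justification covering the $1\times 2$, $2\times 1$ and $1\times 1$ cases simultaneously is to observe that $P_r \cap ([a,b]\times[c,d])$ is itself a closed rectangle, and the Hahn grid cuts it into a rectangular sub-array of closed pieces, which is manifestly edge-connected. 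In the Pierpont $\Rightarrow$ Hahn step the remark about the finitely many small $m$ is fine, but you should note explicitly that Pierpont bounded variation forces $f$ to be bounded (finitely many closed cells, each with finite oscillation, cover the domain), so that each individual $S_H(m)$ is a finite number before you take the maximum over $m\le m_0$. With these cosmetic adjustments the proof is complete.
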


\begin{definition}
   (Tonelli) \citep{James}.
   Let $f:[a,b] \times [c,d] \rightarrow \mathbb{R}$ be such that the total variation function $ \phi( \overline{x})$ is finite almost everywhere in $(a,b),$ and its Lebesgue integral over $(a,b)$ exists (finite). Further assume that  a similar condition is satisfied by $ \mu (\overline{y}).$ Then $f$ is of bounded variation in the Tonelli sense.
\end{definition}

\begin{theorem}(\citep{Raymond}, Theorem 7, p. 718). \label{BBVET5}
      A bivariate function $f:[a,b] \times [c,d] \rightarrow \mathbb{R}$ is  of bounded variation in the Arzel\'{a} sense if and only if it is expressible as the difference between two bounded functions, $f_1$ and $f_2,$ satisfying the inequalities $$ \triangle_{10} f_i(x,y) \ge 0, \quad  \triangle_{01} f_i(x,y) \ge 0 ~~ ~~ ~~~~~(i = 1,2).$$
\end{theorem}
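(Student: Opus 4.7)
My approach treats the theorem as a bivariate analogue of Jordan's decomposition: the conditions $\triangle_{10} f_i \geq 0$ and $\triangle_{01} f_i \geq 0$ say precisely that each $f_i$ is non-decreasing in each variable separately, so the plan is to realize $f$ as the difference of two such bounded functions built from the Arzel\'a variation of $f$ itself.

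\textbf{Sufficiency} (the easier direction). Suppose $f = f_1 - f_2$ with each $f_i$ bounded and satisfying $\triangle_{10} f_i, \triangle_{01} f_i \geq 0$. For any Arzel\'a admissible chain $\{(x_i, y_i)\}_{i=0}^m$, the monotonicity conditions force $\triangle f_k(x_i, y_i) \geq 0$, so $\sum_{i=0}^{m-1} \triangle f_k(x_i, y_i)$ telescopes to $f_k(b,d) - f_k(a,c)$. The triangle inequality then bounds $\sum_{i=0}^{m-1} |\triangle f(x_i, y_i)|$ by $[f_1(b,d) - f_1(a,c)] + [f_2(b,d) - f_2(a,c)]$ uniformly over all admissible chains.

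\textbf{Necessity.} Assuming $f$ has finite Arzel\'a total variation $V^{*}$, for each $(u,v) \in [a,b] \times [c,d]$ I would introduce the positive and negative Arzel\'a variation functions
\[
P(u,v) := \sup_{\pi} \sum_{i=0}^{m-1} \bigl(\triangle f(x_i, y_i)\bigr)^{+}, \qquad N(u,v) := \sup_{\pi} \sum_{i=0}^{m-1} \bigl(\triangle f(x_i, y_i)\bigr)^{-},
\]
where the supremum ranges over admissible monotone chains $\pi$ from $(a,c)$ to $(u,v)$. Both are bounded above by $V^{*}$, and a chain-extension argument --- appending $(u', v)$ with $u' \geq u$, or $(u, v')$ with $v' \geq v$, to any chain ending at $(u,v)$ preserves admissibility and adds a nonnegative term to the respective positive or negative part sum --- yields $\triangle_{10} P, \triangle_{01} P, \triangle_{10} N, \triangle_{01} N \geq 0$. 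I would then set $f_1 := P + f(a,c)$ and $f_2 := N$, both bounded and non-decreasing in each variable.

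The main technical step, which I expect to be the principal obstacle, is the decomposition identity $f(u,v) - f(a,c) = P(u,v) - N(u,v)$. For any admissible chain $\pi$, the telescoping relation $\sum_\pi \triangle f = f(u,v) - f(a,c)$ rewrites as $\sum_\pi (\triangle f)^{+} - \sum_\pi (\triangle f)^{-} = f(u,v) - f(a,c)$. Choosing $\pi$ with $\sum_\pi (\triangle f)^{+} > P(u,v) - \varepsilon$ then forces $N(u,v) \geq \sum_\pi (\triangle f)^{-} > P(u,v) - (f(u,v) - f(a,c)) - \varepsilon$; a symmetric argument using a near-maximizer of $N$ yields the reverse inequality. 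Letting $\varepsilon \to 0$ produces the identity (noting that the two near-maximizers may be different chains, but each inequality uses only a single chain), after which $f = f_1 - f_2$ with the required inequalities is immediate.
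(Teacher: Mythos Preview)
The paper does not supply its own proof of this theorem: it is stated with a citation to Adams--Clarkson (\citep{Raymond}, Theorem~7) and used as a black box, so there is no in-paper argument to compare against. Your proposal stands on its own and is essentially the classical Jordan decomposition transported to the Arzel\'a setting, which is precisely the spirit of the original Adams--Clarkson proof.

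Your argument is correct. In the sufficiency direction, the telescoping bound is clean once one notes (as you do implicitly) that coordinatewise monotonicity of $f_k$ forces $f_k(x_{i+1},y_{i+1}) \ge f_k(x_i,y_{i+1}) \ge f_k(x_i,y_i)$ along any admissible chain. For necessity, your definitions of $P$ and $N$ are well posed because any chain from $(a,c)$ to $(u,v)$ extends to one terminating at $(b,d)$ by appending that single point, so the Arzel\'a bound $V^*$ controls both suprema. The chain-extension step for monotonicity of $P$ and $N$ is sound: appending $(u',v)$ (or $(u,v')$) contributes a term whose positive and negative parts are each nonnegative, so neither supremum can decrease. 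Finally, your $\varepsilon$-argument for the identity $f(u,v)-f(a,c)=P(u,v)-N(u,v)$ is the standard one and goes through exactly as written; the asymmetry you flag (two different near-maximizing chains) is harmless because each inequality is derived independently.

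One cosmetic point worth making explicit when you write this up: the functions $f_1 = P + f(a,c)$ and $f_2 = N$ are bounded because $0 \le P, N \le V^*$, which in turn rests on the chain-extension observation above; stating this link overtly closes the loop with the ``bounded'' requirement in the theorem.
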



\par
 As usual, the class of all real-valued continuous functions defined on the rectangular region $[a,b] \times [c,d]$ is denoted by  $\mathcal{C}\big([a,b] \times [c,d] \big)$ or simply by $\mathcal{C}$.  We shall denote the classes of real-valued functions defined on the region $[a,b] \times [c,d]$ satisfying  the notion of bounded variation in the sense of Vitali, Hardy, Arzel\'a, Pierpont, Fr\'echet, and Tonelli respectively by $\mathcal{V}\big(  [a,b] \times [c,d] \big)$, $\mathcal{H}\big(  [a,b] \times [c,d] \big)$, $\mathcal{A}\big(  [a,b] \times [c,d] \big)$, $\mathcal{P}\big(  [a,b] \times [c,d] \big)$, $\mathcal{F}\big(  [a,b] \times [c,d] \big)$, and $\mathcal{T}\big(  [a,b] \times [c,d] \big)$.

 \begin{theorem}(\citep{James}, item (1c), p. 846).\label{BBVIRT1}
  The following relation between various approaches to the notion of bounded variation  of a bivariate continuous function exists. $$\mathcal{H} \cap  \mathcal{C}  \subseteq    \mathcal{A} \cap \mathcal{C} \subseteq \mathcal{P}\cap  \mathcal{C}\subseteq \mathcal{T} \cap  \mathcal{C}.$$
   \end{theorem}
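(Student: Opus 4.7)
The plan is to establish the three inclusions in turn. For the first two, Theorem~\ref{BBVET5} (the monotone-sum characterization of Arzel\'a bounded variation) is the decisive tool: it reduces Arzel\'a BV to differences of functions non-decreasing in each variable separately, whose oscillations on rectangles are explicit. The third inclusion requires a different, analytic argument that passes from uniform grid oscillation sums to integrals of marginal variations.

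For $\mathcal{H}\cap\mathcal{C}\subseteq\mathcal{A}\cap\mathcal{C}$, a Hardy--Krause-style decomposition does the job. Fix a reference point $(\bar x,\bar y)$ with both marginal functions BV (Hardy's condition (ii) supplies such a pair, as the Vitali control on $\triangle_{11}$ transfers marginal BV from one fibre to all parallel fibres up to a constant). Split
\[
f(x,y)=g(x,y)+f(x,\bar y)+f(\bar x,y)-f(\bar x,\bar y),
\]
where $g(x,y):=f(x,y)-f(x,\bar y)-f(\bar x,y)+f(\bar x,\bar y)$ vanishes on the reference lines. The Vitali hypothesis lets one assign to $g$ a signed Borel measure through $\triangle_{11}$ and then Hahn--Jordan decompose $g=g_+-g_-$ with $\triangle_{10}g_\pm,\triangle_{01}g_\pm\ge 0$. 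The one-variable marginals $f(\cdot,\bar y)$ and $f(\bar x,\cdot)$ split classically into differences of non-decreasing functions. Assembling the pieces yields $f=f_1-f_2$ with $\triangle_{10}f_i,\triangle_{01}f_i\ge 0$, so Theorem~\ref{BBVET5} places $f$ in $\mathcal{A}$.

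For $\mathcal{A}\cap\mathcal{C}\subseteq\mathcal{P}\cap\mathcal{C}$, invoke Theorem~\ref{BBVET5} to write $f=f_1-f_2$ with each $f_i$ monotone non-decreasing in each variable. On a cell $R=[\alpha,\alpha+D]\times[\beta,\beta+D]$,
\[
\omega(f_i;R)=f_i(\alpha+D,\beta+D)-f_i(\alpha,\beta)=\triangle_{10}f_i(\alpha,\beta+D)+\triangle_{01}f_i(\alpha,\beta).
\]
Summing $D\,\omega(f_i;R)$ over a square $D$-net, the $x$-increments telescope across each row to $f_i(b,\cdot)-f_i(a,\cdot)$, a quantity uniformly bounded by $f_i(b,d)-f_i(a,c)$ by monotonicity; multiplying by $D$ and adding the $\sim (d-c)/D$ rows contributes at most $(d-c)\bigl(f_i(b,d)-f_i(a,c)\bigr)$, independent of $D$. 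The symmetric estimate applies to the $y$-increments, and $\omega(f;R)\le\omega(f_1;R)+\omega(f_2;R)$ combines everything into a Pierpont bound uniform in $D$.

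The inclusion $\mathcal{P}\cap\mathcal{C}\subseteq\mathcal{T}\cap\mathcal{C}$ is the most delicate, since Pierpont is a purely geometric hypothesis while Tonelli is measure-theoretic. On a $D$-grid, let $\Omega_i(D):=\sum_j\omega(f;R_{ij})$ be the total cell oscillation in the $i$-th row, so that $\sum_i D\,\Omega_i(D)\le C$ uniformly in $D$ by hypothesis. For any $x\in[a+iD,a+(i+1)D]$ the uniform partition variation
\[
\phi_D(x):=\sum_j\bigl|f(x,c+(j+1)D)-f(x,c+jD)\bigr|
\]
satisfies $\phi_D(x)\le\Omega_i(D)$, whence $\int_a^b\phi_D(x)\,dx\le\sum_i D\,\Omega_i(D)\le C$. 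Restricting $D$ to the dyadic values $(d-c)2^{-k}$ yields a nested sequence of partitions along which $\phi_D(x)\nearrow\phi(x)$, by the standard fact that for a continuous function the supremum of variation sums over refining partitions is the total variation. Monotone convergence then delivers $\int_a^b\phi(x)\,dx\le C$, so $\phi$ is Lebesgue integrable and in particular finite almost everywhere, and the symmetric column argument handles $\mu$. The step that will demand the most care is the identification of the pointwise limit of $\phi_D$ with $\phi$, where the continuity of $f$ on horizontal fibres is crucially used.
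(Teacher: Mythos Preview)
The paper does not supply its own proof of this statement; it is quoted verbatim as a background fact from Clarkson--Adams \cite{James} in the preliminaries (Section~\ref{BP}), with no argument given. So there is nothing in the paper to compare your proposal against, and the only question is whether your three inclusions are argued correctly. They are, in outline, and they follow the classical route one finds in \cite{James,Raymond}.

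A few points where more care is warranted. For $\mathcal{H}\cap\mathcal{C}\subseteq\mathcal{A}\cap\mathcal{C}$, invoking a ``signed Borel measure'' and Hahn--Jordan is heavier machinery than needed: the standard construction defines directly the positive and negative Vitali variations $P(x,y)=\sup\sum(\triangle_{11}f)^+$ and $N(x,y)=\sup\sum(\triangle_{11}f)^-$ over nets of $[a,x]\times[c,y]$, which are manifestly non-decreasing in each variable and satisfy $g=P-N$. For $\mathcal{A}\cap\mathcal{C}\subseteq\mathcal{P}\cap\mathcal{C}$, note that the monotone pieces $f_1,f_2$ supplied by Theorem~\ref{BBVET5} need not be continuous even though $f$ is; your oscillation identity $\omega(f_i;R)=f_i(\alpha+D,\beta+D)-f_i(\alpha,\beta)$ still holds because monotonicity alone forces the extremes at the corners, but this should be said. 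Also, Pierpont's net covers the plane and need not align with $[a,b]\times[c,d]$, so a sentence on boundary cells is owed. For $\mathcal{P}\cap\mathcal{C}\subseteq\mathcal{T}\cap\mathcal{C}$, your argument is sound; the point you flag as delicate, that $\phi_{D_k}(x)\nearrow\phi(x)$ along the dyadic refinement, follows from the elementary fact that for a \emph{continuous} one-variable function the variation sums over partitions with mesh tending to zero converge to the total variation (finite or infinite), together with monotonicity under refinement. The choice $D_k=(d-c)2^{-k}$ makes the $y$-partition exact but may leave a partial column on the right of $[a,b]$; this only improves the integral bound.
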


\subsection{Fractal dimensions}
We shall summarize two notions of fractal dimension briefly here, but refer the reader to \citep{Fal}.
\begin{definition}
  For a  non-empty subset $U$ of $\mathbb{R}^n,$ the diameter of $U$ is defined as $$|U|=\sup\big\{|x-y|: x,y \in U\big\},$$ where $|x-y|$ denotes the usual distance between $x,y$ in $\mathbb{R}^n.$ A  $\delta$-cover of $F$ is a countable  collection of sets  $\{U_i\}$ that cover $F$ such that each $U_i$ is of diameter at most $\delta.$ Suppose $F$ is a subset of $\mathbb{R}^n$ and $s$ is a non-negative real number. For any $\delta >0,$ we define $$H^s_{\delta}(F)= \inf \Big\{\sum_{i=1}^{\infty}|U_i|^s:  \{U_i\} \text{~~is ~~a} ~~\delta- \text{cover ~~of} ~F \Big\} .$$We define the $s-$dimensional Hausdorff measure of $F$ by $H^s(F)= \lim_{\delta \rightarrow 0}H^s_{\delta}(F).$
\end{definition}
 \begin{definition} 
 Let $F \subseteq \mathbb{R}^n$ and $s \ge 0.$ The Hausdorff dimension of $F$ is $$ \dim_H(F)=\inf\{s:H^s(F)=0\}=\sup\{s:H^s(F)=\infty\}.$$
 \end{definition}
 \begin{remark}
 For $s=\dim_H(F),$  $H^s(F)$ may be zero, infinite, or may satisfy $0<H^s(F)< \infty.$ A Borel set satisfying this last condition is termed an $s-$set.
 \end{remark}
 In the sequel, we shall use the following result, which reveals a fundamental property of the Hausdorff dimension.
 \begin{theorem} (\citep{Fal}, Corollary 2.4, p. 32). \label{BR1}
 Let $A \subseteq \mathbb{R}^n$ and  $f: A \to \mathbb{R}^m$.
 \begin{enumerate}[(i)]
 \item If $f:A \to \mathbb{R}^m$ is a Lipschitz map,  then $\dim_{H}\big( f(A) \big) \le \dim_{H}(A).$
 \item If $f:A \to \mathbb{R}^m$ is a bi-Lipschitz map, i.e. $$c_1|x-y| \le |f(x)-f(y) | \le c_2 |x-y|$$ for all $x,y \in A$ and $0<c_1\le c_2 < \infty$, then $\dim_{H}\big( f(A) \big) = \dim_{H}(A).$
 \end{enumerate}
 \end{theorem}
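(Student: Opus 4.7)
The plan is to prove both parts directly from the definition of Hausdorff measure, using the fact that Lipschitz maps shrink diameters by at most a constant factor.

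For part (i), suppose $f$ is Lipschitz with constant $c$, i.e. $|f(x)-f(y)| \le c|x-y|$ for all $x,y \in A$. I would start from an arbitrary $\delta$-cover $\{U_i\}$ of $A$. The natural candidate cover of $f(A)$ is $\{f(A \cap U_i)\}$. The Lipschitz bound gives $|f(A \cap U_i)| \le c|U_i| \le c\delta$, so this family is a $(c\delta)$-cover of $f(A)$. Summing, for any $s \ge 0$,
\[
H^s_{c\delta}\bigl(f(A)\bigr) \;\le\; \sum_i |f(A \cap U_i)|^s \;\le\; c^s \sum_i |U_i|^s.
\]
Taking the infimum over $\delta$-covers of $A$ and then letting $\delta \to 0$ yields $H^s(f(A)) \le c^s H^s(A)$. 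Now if $s > \dim_H(A)$, the right-hand side is zero, so $H^s(f(A)) = 0$ and hence $\dim_H(f(A)) \le s$; taking the infimum over admissible $s$ gives $\dim_H(f(A)) \le \dim_H(A)$.

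For part (ii), the lower bi-Lipschitz inequality $c_1|x-y| \le |f(x)-f(y)|$ implies $f$ is injective on $A$, and the inverse $f^{-1} : f(A) \to A$ is Lipschitz with constant $1/c_1$. Applying part (i) to $f^{-1}$ gives $\dim_H(A) = \dim_H(f^{-1}(f(A))) \le \dim_H(f(A))$. Combined with the bound from part (i) applied to $f$, the two inequalities match and yield equality.

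The only subtlety I anticipate is the purely notational one of ensuring the set-diameter inequality $|f(A \cap U_i)| \le c|U_i|$ truly follows from the pointwise Lipschitz bound (take a supremum over pairs), and being careful that $H^s_\delta$ is defined as an infimum so that a single cover only upper-bounds it. Otherwise the argument is a short and clean consequence of the definitions, which is why this result is stated as a corollary in Falconer and can be cited without further elaboration in the body of the paper.
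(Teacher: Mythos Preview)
Your argument is correct and is exactly the standard proof (this is precisely how Falconer establishes Corollary~2.4). Note, however, that the paper does not supply its own proof of this statement: it is quoted from \cite{Fal} as a background result and used without further justification, so there is no in-paper proof to compare against.
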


 \begin{definition} Let $F \ne \emptyset$ be a bounded subset of $\mathbb{R}^n$ and let $N_{\delta}(F)$ be the smallest number of sets of diameter at most $\delta$ which can cover $F.$ The lower box dimension and upper box dimension of $F$ respectively are defined as $$\underline{\dim}_B(F)=\varliminf_{\delta \rightarrow 0} \frac{\log N_{\delta}(F)}{- \log \delta},$$
 and
 $$\overline{\dim}_B(F)=\varlimsup_{\delta \rightarrow 0} \frac{\log N_{\delta}(F)}{- \log \delta}.$$
 If the above two are equal, we define  the box dimension of $F$ as the common value, that is,  $$\dim_B(F)=\lim_{\delta \rightarrow 0} \frac{\log N_{\delta}(F)}{- \log \delta}.$$
 \end{definition}
\subsection{Fractional integral}
Of the various formulations of fractional integral available, the Riemann-Liouville fractional integral is perhaps
the most used fractional integral, currently. In what follows, we recall this definition in the context of bivariate function; see, for instance, \citep{Samko}.

   \begin{definition}
   Let $f$ be a function defined on a closed rectangle $[a,b] \times [c,d]$ and $\alpha >0 , \beta >0.$ The (mixed) Riemann-Liouville fractional integral of $f$ is defined as $$ \mathcal{I}^{(\alpha, \beta)}f(x,y)=\frac{1}{\Gamma (\alpha) . \Gamma (\beta)} \int_a ^x \int_c ^y (x-s)^{\alpha-1} (y-t)^{\beta-1}f(s,t)~\mathrm{d}s~\mathrm{d}t .$$
   \end{definition}

\section{On Fractal dimension of the graph of a bivariate function} \label{MS1}
We begin by assembling some basic facts about the fractal dimensions of the graphs of Lipschitz functions. Some of these serve as prelude to our main results, whereas some might be of independent interest. \\
Here and in the rest of the article, we shall use the following notation. Let $A \subseteq \mathbb{R}^n$ and $f: A \to \mathbb{R}$ be a function. The graph of $f$ denoted by $G_f$ is the set
$$G_f= \Big\{\big(x,f(x)\big): x \in A  \Big\} \subseteq A \times \mathbb{R}.$$
 We shall denote by $\|.\|_2$, the Euclidean norm in the appropriate space $\mathbb{R}^m$. Some of the preparatory lemmas given below or  perhaps their special cases can be found in a  different context and in an abbreviated form elsewhere; see, for instance, \citep{LW}. However, for the sake of completeness and record, we include detailed arguments here.

 \begin{lemma} \label{BBVHDL1}
 Let $A \subseteq \mathbb{R}^n $ and $f :A \rightarrow \mathbb{R}$ be continuous on $A.$ Then $\dim_H (G_f) \ge \dim_H(A).$
 In particular, if $f :[a,b] \times [c,d] \rightarrow \mathbb{R}$ is continuous on $[a,b] \times[c,d],$ then $\dim_H (G_f) \ge 2.$
 \end{lemma}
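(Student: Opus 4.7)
The plan is to reduce this to the Lipschitz-map property of Hausdorff dimension recorded in Theorem \ref{BR1}(i). The key observation is that the graph $G_f$ always projects onto the domain $A$ via a coordinate projection, and such a projection is automatically Lipschitz with constant one.

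More precisely, I would define the projection $\pi : \mathbb{R}^{n+1} \to \mathbb{R}^n$ by $\pi(x_1,\dots,x_n,x_{n+1}) = (x_1,\dots,x_n)$. For any two points $\bigl(x,f(x)\bigr), \bigl(y,f(y)\bigr) \in G_f$, one has
\[
\bigl\|\pi\bigl(x,f(x)\bigr) - \pi\bigl(y,f(y)\bigr)\bigr\|_2 = \|x-y\|_2 \le \bigl\|\bigl(x,f(x)\bigr) - \bigl(y,f(y)\bigr)\bigr\|_2,
\]
so $\pi$ restricted to $G_f$ is $1$-Lipschitz. Since $\pi(G_f) = A$, Theorem \ref{BR1}(i) yields $\dim_H(A) = \dim_H\bigl(\pi(G_f)\bigr) \le \dim_H(G_f)$, which is the general inequality asserted.

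For the second assertion, I would simply take $A = [a,b] \times [c,d]$ and note that this set has Hausdorff dimension $2$ (for instance, because it contains an open subset of $\mathbb{R}^2$, or equivalently has positive two-dimensional Lebesgue measure and hence positive two-dimensional Hausdorff measure). Combining with the general inequality gives $\dim_H(G_f) \ge 2$.

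Since the argument only invokes a $1$-Lipschitz coordinate projection, the continuity of $f$ is not actually used in my proof; continuity seems to be stated mainly to align the hypothesis with the setting of subsequent results. There is no genuine obstacle here: the proof is essentially a one-line application of the Lipschitz monotonicity of Hausdorff dimension, and the main thing to be careful about is the direction of the inequality (Lipschitz maps can only decrease Hausdorff dimension, so projecting the graph down to the domain gives the correct lower bound on $\dim_H(G_f)$).
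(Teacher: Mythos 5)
Your proposal is correct and follows essentially the same route as the paper: the map you call $\pi$ restricted to $G_f$ is exactly the paper's map $T\bigl((t,f(t))\bigr)=t$, shown to be $1$-Lipschitz and surjective onto $A$, after which Theorem \ref{BR1}(i) gives the inequality. Your added observation that continuity of $f$ is not actually needed is accurate but does not change the argument.
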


 \begin{proof}
 Define a map $T : G_f \rightarrow A $ by $T\big((t,f(t))\big)=t, $ where $G_f \subseteq \mathbb{R}^{n+1}$ and $A\subseteq \mathbb{R}^n$ are endowed with the metric induced by the usual Euclidean norm. We have\\
 $$\big\|T\big((t,f(t))\big)-T\big((u,f(u))\big)\big\|_2 = \|t -u\|_2 \le \big\|\big(t,f(t)\big)-\big(u,f(u)\big)\big\|_2 ,$$ therefore, $T$ is a Lipschitz map. Using a basic property of the Hausdorff dimension (Cf. Theorem \ref{BR1}) we have $\dim_H \big(T(G_f)\big) \le \dim_H(G_f).$ It is easy to check that the map $T$ is surjective and hence the result.
 \end{proof}
\begin{lemma}\label{BBVEL2}
 Let $A \subseteq \mathbb{R}^n $ and $f ,g:A \rightarrow \mathbb{R}$ be continuous on $A.$ Suppose that $f$ is a Lipschitz function. Then $\dim_H (G_{f+g}) = \dim_H (G_g).$
\end{lemma}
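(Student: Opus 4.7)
The plan is to exhibit a bi-Lipschitz bijection between $G_g$ and $G_{f+g}$ and then invoke Theorem \ref{BR1}(ii). The natural candidate is the vertical shear
\[
\Phi : G_g \longrightarrow G_{f+g}, \qquad \Phi\big(t, g(t)\big)=\big(t, f(t)+g(t)\big),
\]
which is clearly a bijection with inverse $\Phi^{-1}\big(t, f(t)+g(t)\big)=\big(t, g(t)\big)$.

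To see that $\Phi$ is Lipschitz, denote by $L$ a Lipschitz constant for $f$ and estimate
\[
\big\|\Phi(t,g(t))-\Phi(u,g(u))\big\|_2^2 = \|t-u\|_2^2 + \big(f(t)+g(t)-f(u)-g(u)\big)^2.
\]
Expanding the second term via $(\alpha+\beta)^2\le 2\alpha^2+2\beta^2$ and using $|f(t)-f(u)|\le L\|t-u\|_2$ bounds this by
\[
(1+2L^2)\|t-u\|_2^2 + 2\big(g(t)-g(u)\big)^2 \le C\,\big\|(t,g(t))-(u,g(u))\big\|_2^2,
\]
with $C=\max(1+2L^2,2)$. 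For the reverse Lipschitz bound, I would run the same inequality in the other direction by writing $g(t)-g(u)=\big[(f(t)+g(t))-(f(u)+g(u))\big]-\big[f(t)-f(u)\big]$ and again applying $(\alpha-\beta)^2\le 2\alpha^2+2\beta^2$ together with the Lipschitz property of $f$. This yields a constant $c>0$ with
\[
\big\|(t,g(t))-(u,g(u))\big\|_2 \le c\,\big\|\Phi(t,g(t))-\Phi(u,g(u))\big\|_2,
\]
so $\Phi$ is bi-Lipschitz on $G_g$.

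With bi-Lipschitzness established, Theorem \ref{BR1}(ii) gives $\dim_H(G_g)=\dim_H\big(\Phi(G_g)\big)=\dim_H(G_{f+g})$, which is the claim. There is no serious obstacle here: the only point that requires a little care is that Lipschitz-ness of $\Phi^{-1}$ is not automatic from Lipschitz-ness of $\Phi$ in general, so I would make sure to verify both inequalities explicitly rather than appealing to invertibility; this is exactly where the assumption that $f$ (and hence $-f$) is Lipschitz enters symmetrically. Continuity of $g$ is not used quantitatively in the argument, but it is implicit in the definition of the graph as a subset of $\mathbb{R}^{n+1}$.
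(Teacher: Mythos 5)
Your proposal is correct and follows essentially the same route as the paper: the same vertical shear $T\big(t,g(t)\big)=\big(t,f(t)+g(t)\big)$, the same $(\alpha+\beta)^2\le 2\alpha^2+2\beta^2$ estimate with the Lipschitz constant of $f$ to verify both Lipschitz inequalities (your reverse bound via $g(t)-g(u)=\big[(f+g)(t)-(f+g)(u)\big]-\big[f(t)-f(u)\big]$ is just a cleaner phrasing of the paper's computation, with the same constant $\max\{\sqrt{1+2L^2},\sqrt{2}\}$), and the same appeal to bi-Lipschitz invariance of Hausdorff dimension.
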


 \begin{proof}
  We define a map $T : G_g \rightarrow G_{f+g} $ by $T\big((t,g(t))\big)=\big(t,f(t)+g(t)\big).$ It is easy to check that the map $T$ is a surjective. Though it is routine to check that $T$ is bi-Lipschitz as well, we shall include the details for sake of completeness and record. Let $M:= \max\{ \sqrt{1+2L^2},\sqrt{2}\}.$ We have
 \begin{equation*}
  \begin{aligned}
      \Big\|T\big((t,g(t))\big)-T\big((u,g(u))\big)\Big\|_2 =&~ \Big\|\big(t,f(t)+g(t)\big)-\big(u,f(u)+g(u)\big)\Big\|_2\\ =&~\sqrt{\|t-u\|_2^2+(f(t)+g(t)-f(u)-g(u))^2}\\
       \le &~\sqrt{\|t-u\|_2^2+2(f(t)-f(u))^2+2(g(t)-g(u))^2}\\
       \le &~ \sqrt{\|t-u\|_2^2+2L^2\|t-u\|_2^2+2(g(t)-g(u))^2}\\  \le &~ M\sqrt{\|t-u\|_2^2+(g(t)-g(u))^2}\\
        =&~ M \big\|(t,g(t))-(u,g(u))\big\|_2,
  \end{aligned}
 \end{equation*}
 where $L$ is a Lipschitz constant of $f$. Furthermore,
 \begin{equation*}
   \begin{aligned}
      \Big \|T\big((t,g(t))\big)-T\big((u,g(u))\big)\Big\|_2=&~ \Big\|\big(t,f(t)+g(t)\big)-\big(u,f(u)+g(u)\big)\Big\|_2.\\ =&~\sqrt{\|t-u\|_2^2+(f(t)+g(t)-f(u)-g(u))^2}.\\
        = &~ \frac{M}{M} \sqrt{\|t-u\|_2^2+(f(t)+g(t)-f(u)-g(u))^2}.\\
        \ge &~ \frac{1}{M} \sqrt{\|t-u\|_2^2 [1+2L^2]+2(f(t)+g(t)-f(u)-g(u))^2}.\\
        \ge &~ \frac{1}{M} \sqrt{\|t-u\|_2^2+2(f(t)+g(t)-f(u)-g(u))^2+2(f(t)-f(u))^2}.\\
         =&~ \frac{1}{M} \big\|(t,g(t))-(u,g(u))\big\|_2.
   \end{aligned}
  \end{equation*}
   Consequently, $T$ is a bi-Lipschitz map. Using the fact that the Hausdorff
dimension is invariant under bi-Lipschitz transformations (Cf. Theorem \ref{BR1}), we have   $\dim_H(G_{f+g}) = \dim_H (T(G_f)) = \dim_H (G_f)$.
 \end{proof}
\begin{remark}
 Since the box dimension is Lipschitz invariant, the above lemma holds for the box dimension as well.
 \end{remark}
 As is customary, we define multiplication of two functions $f,g:A \subseteq \mathbb{R}^n \to \mathbb{R}$ by $(fg)(x)=f(x)g(x).$
 \begin{lemma}
  Let $A \subseteq \mathbb{R}^n $ and $f ,g:A \rightarrow \mathbb{R}$ be continuous on $A.$ Suppose that $f$ is a Lipschitz function. Then $  \dim_H (G_{fg}) \le \dim_H (G_g).$

  \end{lemma}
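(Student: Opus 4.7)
The plan is to imitate the strategy of the preceding lemma: exhibit a \emph{surjective Lipschitz} map $T : G_g \to G_{fg}$, and then invoke part (i) of Theorem \ref{BR1} to conclude $\dim_H(G_{fg}) = \dim_H(T(G_g)) \le \dim_H(G_g)$. Note that only a one-sided Lipschitz bound is required here, not the bi-Lipschitz property, which matches the direction of the inequality being asserted.

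The natural candidate is $T\bigl(t,g(t)\bigr) = \bigl(t, f(t)g(t)\bigr)$, which is visibly onto $G_{fg}$. For the Lipschitz estimate I would rewrite the relevant increment via the standard product identity
\[
f(t)g(t) - f(u)g(u) \;=\; f(t)\bigl(g(t)-g(u)\bigr) + g(u)\bigl(f(t)-f(u)\bigr),
\]
apply the triangle inequality, and use the Lipschitz bound $|f(t)-f(u)| \le L\|t-u\|_2$ on the second term. Writing $M$ for a common sup-bound of $|f|$ and $|g|$ (see remark on boundedness below) and $L$ for the Lipschitz constant of $f$, this yields
\[
|f(t)g(t) - f(u)g(u)| \;\le\; M\,|g(t)-g(u)| + ML\,\|t-u\|_2.
\]
Squaring, using $(a+b)^2 \le 2a^2 + 2b^2$, and assembling into the ambient Euclidean metric on $G_{fg} \subseteq \mathbb{R}^{n+1}$ gives
\[
\bigl\|T(t,g(t)) - T(u,g(u))\bigr\|_2^2 \;\le\; \bigl(1+2M^2L^2\bigr)\|t-u\|_2^2 + 2M^2\bigl(g(t)-g(u)\bigr)^2,
\]
which is bounded by $C\,\bigl\|(t,g(t))-(u,g(u))\bigr\|_2^2$ with $C = \max\{1+2M^2L^2,\,2M^2\}$. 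Thus $T$ is Lipschitz with constant $\sqrt{C}$, and Theorem \ref{BR1}(i) finishes the argument.

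The one genuine subtlety, and what I expect to be the main obstacle, is the boundedness hypothesis implicit in the step $|f(t)|, |g(u)| \le M$. Unlike the additive case treated in Lemma \ref{BBVEL2}, where no such bound was needed because the increment splits cleanly into $f$-increments and $g$-increments, the product rule forces the \emph{values} of $f$ and $g$ (not merely their increments) to enter the estimate. This is fine in the setting the paper actually cares about, namely $A = [a,b]\times[c,d]$ compact, where continuity of $f$ and $g$ automatically provides such an $M$; in the stated generality one should either assume $A$ bounded (so that the Lipschitz $f$ is bounded and the continuous $g$ need only be restricted to a compact subset, with Hausdorff dimension recovered by countable stability), or explicitly add the hypothesis that $f$ and $g$ are bounded.
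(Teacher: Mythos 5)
Your proposal is correct and follows essentially the same route as the paper: the same surjective map $T\bigl((t,g(t))\bigr)=\bigl(t,f(t)g(t)\bigr)$, the same product-splitting Lipschitz estimate, and the same appeal to Theorem \ref{BR1}(i), with your constant $\sqrt{\max\{1+2M^2L^2,\,2M^2\}}$ matching the paper's $\max\{\sqrt{1+2M_gL^2},\sqrt{2}M_f\}$ in spirit. Your caveat about boundedness is well taken, but it applies equally to the paper's own proof, which silently uses $\|f\|_{\infty}$ and $\|g\|_{\infty}$; in the compact setting the paper actually works in, this is harmless, exactly as you note.
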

  \begin{proof}
  The mapping $T : G_g \rightarrow G_{fg} $ defined by $$T\big((t,g(t))\big)=\big(t,f(t)g(t)\big)$$ is surjective and Lipschitz with a Lipschitz constant $M= \max\{ \sqrt{1+2M_gL^2},\sqrt{2}M_f\},$ where $L$ is a Lipschitz constant of $f,$ $M_f=\|f\|_{\infty},$ and $M_g=\|g\|_{\infty}.$
  \end{proof}
  \begin{remark}
  Since the box dimension is Lipschitz invariant, the above lemma is also true for the box dimension.
  \end{remark}

  \begin{remark}
    In the previous lemma, we may not get equality in general. To see this, let us take $g$ to be the Weierstrass function with the Hausdorff dimension strictly greater than one (See \citep{Shen}) and $f$ to be the zero function. Then, we obtain $1=\dim_H(G_{fg}) < \dim_H (G_g).$
  \end{remark}

 \begin{lemma}(\citep{Fal}, Corollary 7.4, p. 102).
 Let $D,E \subseteq \mathbb{R}^n.$ We have
 \begin{enumerate}[(i)]
 \item $\overline{\dim}_B (D \times E) \le \overline{\dim}_B (D)+\overline{\dim}_B (E).$
 \item If $\dim_H (D)=\overline{\dim}_B (D)$, then $\dim_H(D \times E) =\dim_H (D) +\dim_H (E).$
 \end{enumerate}
 \end{lemma}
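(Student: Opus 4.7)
The plan is to handle (i) and (ii) separately: part (i) reduces to an elementary counting argument with minimal $\delta$-covers, while part (ii) requires combining a mass distribution/Frostman argument for the lower bound with a multi-scale covering argument (where the hypothesis enters) for the upper bound.

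For (i), I would take minimal $\delta$-covers $\{U_i\}_{i=1}^{N_\delta(D)}$ of $D$ and $\{V_j\}_{j=1}^{N_\delta(E)}$ of $E$, and observe that the products $U_i\times V_j$ form a cover of $D\times E$ by sets of diameter at most $\sqrt{2}\,\delta$. Therefore $N_{\sqrt{2}\delta}(D\times E)\le N_\delta(D)\,N_\delta(E)$. Taking logarithms, dividing by $-\log(\sqrt{2}\,\delta)$, sending $\delta\to 0$, and using the elementary inequality $\limsup(a_\delta+b_\delta)\le\limsup a_\delta+\limsup b_\delta$ yields the stated inequality.

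For (ii), the lower direction $\dim_H(D\times E)\ge \dim_H(D)+\dim_H(E)$ holds without any hypothesis, and I would obtain it from the mass distribution principle applied to a product measure. For any $s<\dim_H(D)$ and $t<\dim_H(E)$, Frostman's lemma supplies finite non-trivial Borel measures $\mu$ on $D$ and $\nu$ on $E$ satisfying $\mu\bigl(B(x,r)\bigr)\le Cr^s$ and $\nu\bigl(B(y,r)\bigr)\le Cr^t$. Comparing balls in $\mathbb{R}^{2n}$ with products of balls of comparable radius in $\mathbb{R}^n$, one finds $(\mu\times\nu)\bigl(B((x,y),r)\bigr)\le C'r^{s+t}$, and the mass distribution principle then forces $\dim_H(D\times E)\ge s+t$. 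Supremizing over admissible $s$ and $t$ gives the desired lower bound.

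The main obstacle is the reverse inequality, where the hypothesis $\dim_H(D)=\overline{\dim}_B(D)$ is used essentially. Fix arbitrary $s>\dim_H(D)=\overline{\dim}_B(D)$ and $t>\dim_H(E)$. The box-dimension hypothesis gives $N_r(D)\le r^{-s}$ for all sufficiently small $r$. Since $t>\dim_H(E)$, for every $\varepsilon>0$ there is a $\delta$-cover $\{V_j\}$ of $E$ with $\sum_j|V_j|^t<\varepsilon$. For each $j$, setting $r_j=|V_j|$ and covering $D$ by at most $r_j^{-s}$ sets of diameter at most $r_j$, the products with $V_j$ yield a cover of $D\times E$ by sets of diameter at most $\sqrt{2}\,\delta$, whose associated $(s+t)$-sum is bounded by
\[
2^{(s+t)/2}\sum_j r_j^{-s}\,r_j^{s+t}=2^{(s+t)/2}\sum_j|V_j|^t<2^{(s+t)/2}\varepsilon.
\]
Hence $H^{s+t}(D\times E)=0$, so $\dim_H(D\times E)\le s+t$, and letting $s\downarrow\dim_H(D)$ and $t\downarrow\dim_H(E)$ matches the lower bound and completes the proof.
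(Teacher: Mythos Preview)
The paper does not supply its own proof of this lemma; it is simply quoted from Falconer's book (Corollary 7.4) and used as a black box in the subsequent Lemma~\ref{BBVEL4}. Your argument is correct and is essentially the standard proof one finds in Falconer: the product-cover counting for (i), and for (ii) the Frostman/mass distribution lower bound together with the multi-scale covering upper bound that exploits $\dim_H(D)=\overline{\dim}_B(D)$ to control $N_{r_j}(D)$ at the variable scales $r_j=|V_j|$.

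One minor technical point worth flagging: the Frostman step in your lower bound tacitly assumes $D$ and $E$ are Borel (or analytic), which is the standing hypothesis in Falconer's treatment although the paper's statement omits it. This is harmless for the paper's purposes, since in the only application (Lemma~\ref{BBVEL4}) the sets involved are $[a,b]$ and the graph of a continuous function, both compact. You may also want to remark that sets $V_j$ with $|V_j|=0$ can be slightly enlarged without affecting the estimate, so that each scale $r_j$ is strictly positive.
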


\begin{lemma} \label{BBVEL4}
Let $f :[c,d] \rightarrow \mathbb{R}$ be continuous and let $ a< b.$ Define a set $E= \big\{(x,y,f(y)): x \in [a,b], y \in [c,d]  \big\}.$ Then, $\dim_H (E)= \dim_H (G_f) + 1$ and $ \overline{\dim}_B(E) \le   \overline{\dim}_B(G_f) +1.$
\end{lemma}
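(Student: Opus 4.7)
The plan is to recognise $E$ as (an isometric copy of) the Cartesian product $[a,b] \times G_f$, where $G_f \subseteq [c,d] \times \mathbb{R}$, and then invoke the preceding product-dimension lemma. Concretely, I would introduce the map $\Phi: [a,b] \times G_f \to E$ by $\Phi\bigl(x,(y,f(y))\bigr) = (x,y,f(y))$. Viewing $[a,b] \times G_f$ as a subset of $\mathbb{R}^3$ with the Euclidean norm, $\Phi$ is just a reparenthesisation of a triple, hence an isometry, and in particular bi-Lipschitz. By Theorem \ref{BR1}, $\Phi$ preserves the Hausdorff dimension; an analogous routine check (which I would merely remark on) shows that an isometry also preserves the upper box dimension. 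Thus it suffices to compute $\dim_H\bigl([a,b] \times G_f\bigr)$ and $\overline{\dim}_B\bigl([a,b] \times G_f\bigr)$.

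For the Hausdorff part, I would apply part (ii) of the previous lemma with $D = [a,b]$ and the set called $E$ there taken to be $G_f$. The hypothesis to be verified is $\dim_H(D) = \overline{\dim}_B(D)$, which holds because $\dim_H([a,b]) = \overline{\dim}_B([a,b]) = 1$ for any non-degenerate closed interval (a standard fact I would cite). The lemma then yields
\[
\dim_H(E) \;=\; \dim_H\bigl([a,b] \times G_f\bigr) \;=\; \dim_H([a,b]) + \dim_H(G_f) \;=\; 1 + \dim_H(G_f).
\]

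For the upper box part, I would appeal to part (i) of the same lemma, which requires no extra hypothesis, to obtain
\[
\overline{\dim}_B(E) \;\le\; \overline{\dim}_B([a,b]) + \overline{\dim}_B(G_f) \;=\; 1 + \overline{\dim}_B(G_f).
\]

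No real obstacle arises; the entire content is the observation that $E$ is a Cartesian product in disguise and that the $[a,b]$-factor contributes exactly $1$ to both fractal dimensions under consideration. The only minor point of bookkeeping is confirming that the permutation of coordinates taking $(x,(y,z))$ to $(x,y,z)$ is an isometry of $\mathbb{R}^3$, so that the product-dimension lemma applies to $E$ itself and not merely to a bi-Lipschitz image of it.
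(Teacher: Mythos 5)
Your proposal is correct and follows essentially the same route as the paper: identify $E$ with the product $[a,b]\times G_f$ and apply the preceding product-dimension lemma, using $\dim_H([a,b])=\overline{\dim}_B([a,b])=1$ for part (ii) and part (i) for the upper box bound. The only difference is that you make explicit the coordinate-reparenthesisation isometry, which the paper simply treats as an equality of sets.
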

\begin{proof}
First let us note that the set $E$ is equal to $[a,b] \times G_f.$
Since $\dim_H ([a,b])=\overline{\dim}_B ([a,b]),$ by the previous lemma it follows that $\dim_H (E)= \dim_H (G_f) + 1$ and $ \overline{\dim}_B(E) \le   \overline{\dim}_B(G_f) +1.$
\end{proof}
Next we shall study the Hausdorff dimension of the graphs of some special type of bivariate functions. Let $f :[a,b] \rightarrow \mathbb{R}$ and $g:[c,d] \rightarrow \mathbb{R}$ be continuous maps. Define $h_1,h_2 :[a,b] \times [c,d] \rightarrow \mathbb{R}$ by $$h_1(x,y)=f(x)+g(y), \quad \text{and} \quad h_2(x,y)=f(x)g(y).$$
 \begin{lemma}  \label{BBVEL3}
   Let $f :[a,b] \rightarrow \mathbb{R}$ be a Lipschitz map and $g:[c,d] \rightarrow \mathbb{R}$ be continuous. Then, $  \dim_H (G_{h_1}) = \dim_H (G_g)+1$ and   $\dim_H (G_{h_2})\le \dim_H(G_g)+1.$
   \end{lemma}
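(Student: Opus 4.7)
The plan is to reduce both statements to the machinery already assembled in the section, by lifting the univariate data to bivariate auxiliary functions and then invoking Lemmas \ref{BBVEL2}, \ref{BBVEL4} together with the unnamed multiplication lemma preceding Lemma \ref{BBVEL4}.

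First I would introduce two bivariate functions $F,G:[a,b]\times[c,d]\to\mathbb{R}$ defined by $F(x,y)=f(x)$ and $G(x,y)=g(y)$. The point of this move is that $h_1=F+G$ and $h_2=FG$, so the bivariate problem is now phrased entirely in terms of sums and products of continuous maps on $[a,b]\times[c,d]$, which is the setting of the earlier lemmas. The crucial observation is that $F$ inherits the Lipschitz property of $f$: using the Euclidean norm on $[a,b]\times[c,d]$, one has
\[
|F(x_1,y_1)-F(x_2,y_2)|=|f(x_1)-f(x_2)|\le L|x_1-x_2|\le L\,\|(x_1,y_1)-(x_2,y_2)\|_2,
\]
where $L$ is a Lipschitz constant of $f$. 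The function $G$ is merely continuous, since $g$ is.

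For the equality concerning $h_1$, I would apply Lemma \ref{BBVEL2} with the Lipschitz function $F$ and the continuous function $G$, giving $\dim_H(G_{h_1})=\dim_H(G_{F+G})=\dim_H(G_G)$. Then I would compute $\dim_H(G_G)$ by observing that
\[
G_G=\big\{(x,y,g(y)): x\in[a,b],\,y\in[c,d]\big\},
\]
which is precisely the set $E$ appearing in Lemma \ref{BBVEL4}. That lemma yields $\dim_H(G_G)=\dim_H(G_g)+1$, closing the first assertion.

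For the inequality concerning $h_2$, I would apply the multiplication lemma stated just before Lemma \ref{BBVEL4} (which gives $\dim_H(G_{fg})\le\dim_H(G_g)$ whenever $f$ is Lipschitz and $g$ continuous) with $F$ Lipschitz and $G$ continuous, obtaining $\dim_H(G_{h_2})=\dim_H(G_{FG})\le\dim_H(G_G)$. Combining this again with $\dim_H(G_G)=\dim_H(G_g)+1$ from Lemma \ref{BBVEL4} gives the desired bound. I do not foresee a genuine obstacle here; the only point that deserves care is the verification that $F(x,y)=f(x)$ is Lipschitz on the rectangle in the Euclidean metric so that the two preparatory lemmas apply verbatim, and that the graph of $G$ matches the product set $[a,b]\times G_g$ used in Lemma \ref{BBVEL4}.
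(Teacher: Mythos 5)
Your proposal is correct and is essentially the paper's own argument in lightly repackaged form: the paper defines $E=\{(x,y,g(y)): x\in[a,b],\,y\in[c,d]\}$ and maps it onto $G_{h_1}$ by the surjective bi-Lipschitz map $(t,u,g(u))\mapsto(t,u,f(t)+g(u))$, which is exactly the map underlying Lemma \ref{BBVEL2} once you lift $f,g$ to your bivariate $F,G$, and both arguments then conclude with Lemma \ref{BBVEL4} via $G_G=E=[a,b]\times G_g$. The only difference is organizational: you invoke Lemma \ref{BBVEL2} and the product lemma rather than verifying the (bi-)Lipschitz properties of the explicit maps, which has the minor merit of making the $h_2$ case explicit where the paper only says ``a similar proof.''
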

  \begin{proof}
   Define a set $E= \big\{(x,y,g(y)): x \in [a,b], y \in [c,d]  \big\}.$ The mapping $T : E \rightarrow G_{h_1} $  defined by $$T\big((t,u,g(u))\big)=(t,u,f(t)+g(u)).$$
   is a surjective bi-Lipschitz map, whence the previous lemma implies that   $\dim_H(G_{h_1}) = \dim_H \big(T(E)\big) = \dim_H (G_g)+1.$ A similar proof for the other conclusion.
   \end{proof}
   \begin{remark} \label{BBVERM}
   On similar lines using lemma \ref{BBVEL4}, we have $  \overline{\dim}_B (G_{h_1}) \le \overline{\dim}_B (G_g)+1$ and   $\overline{\dim}_B (G_{h_2})\le \overline{\dim}_B (G_g)+1.$
   \end{remark}
\begin{definition}
Let $A \subset \mathbb{R}^2$ be a closed bounded  rectangle and $f: A \to \mathbb{R}$. The maximum range of  $f$ over the rectangle $A$ is defined as
$$ R_f[A]= \sup_{(t,u),(x,y) \in A} |f(t,u)-f(x,y)|.$$
\end{definition}
As indicated in the introductory section, next we shall provide a bivariate analogue of Proposition $11.1$  in Falconer \citep{Fal}.
\begin{lemma} \label{BBVL2}
Let $f :[a,b] \times [c,d] \rightarrow \mathbb{R}$ be continuous. Suppose that $ 0 < \delta < \min\{b-a,d-c\},$ $ \frac{b-a}{\delta} \le m \le 1+ \frac{b-a}{\delta }$ and $ \frac{d-c}{\delta} \le n \le 1+ \frac{d-c}{\delta }$ for some $m ,n \in \mathbb{N}.$ If $N_{\delta}(G_f)$ is the number of $\delta-$cubes that intersect the graph of $f,$  then $$ \frac{1}{\delta} \sum_{j=1}^{n} \sum_{i=1}^{m} R_f[A_{ij}] \le N_{\delta}(G_f) \le 2mn + \frac{1}{\delta} \sum_{j=1}^{n} \sum_{i=1}^{m} R_f[A_{ij}].$$
\end{lemma}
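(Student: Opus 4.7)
\medskip

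\noindent\textbf{Proof plan for Lemma \ref{BBVL2}.}
The plan is to mimic, in a two-dimensional setting, the standard box-counting argument used in Falconer's Proposition 11.1. First I would place a $\delta$-grid on $[a,b]\times[c,d]\times\mathbb{R}$; the horizontal grid lines carve $[a,b]\times[c,d]$ into the $mn$ cells $A_{ij}$ referred to in the statement (the innermost cells being genuine $\delta\times\delta$ squares, the boundary cells possibly smaller, but in either case contained in a $\delta\times\delta$ square). The key observation is that $N_\delta(G_f)$ decomposes as a sum over the cells: above each $A_{ij}$ there is a vertical column of mesh cubes, and the number of cubes in that column that meet $G_f$ is exactly the number of $z$-mesh intervals $[k\delta,(k+1)\delta]$ that intersect $\{f(x,y):(x,y)\in A_{ij}\}$.

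Next I would exploit continuity of $f$. Since $A_{ij}$ is compact and connected and $f$ is continuous, the image $f(A_{ij})$ is a closed interval of length exactly $R_f[A_{ij}]$. An interval of length $\ell$ on the real line meets between $\ell/\delta$ and $\ell/\delta + 2$ of the $z$-mesh intervals (more precisely, $\lceil \ell/\delta\rceil$ up to $\lceil \ell/\delta\rceil+1$), and in any case at least one such interval. Call $n_{ij}$ the number of cubes in the column above $A_{ij}$ that hit $G_f$; the estimates above give
\begin{equation*}
\frac{R_f[A_{ij}]}{\delta}\;\le\; n_{ij}\;\le\; \frac{R_f[A_{ij}]}{\delta}+2.
\end{equation*}
Summing over $1\le i\le m$, $1\le j\le n$ and using $N_\delta(G_f)=\sum_{i,j}n_{ij}$ yields
\begin{equation*}
\frac{1}{\delta}\sum_{j=1}^{n}\sum_{i=1}^{m}R_f[A_{ij}] \;\le\; N_\delta(G_f)\;\le\; 2mn+\frac{1}{\delta}\sum_{j=1}^{n}\sum_{i=1}^{m}R_f[A_{ij}],
\end{equation*}
which is the desired conclusion.

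The only delicate point is the identity $f(A_{ij})=[\min_{A_{ij}}f,\max_{A_{ij}}f]$, for which one needs $A_{ij}$ to be connected (true for the truncated boundary cells as well, as they are themselves rectangles) and $f$ to be continuous; extreme values are attained by compactness, and the intermediate value property of connected sets under continuous maps fills the interval. Apart from this, the argument is a straightforward bookkeeping exercise, and the main obstacle is purely notational: keeping track of boundary cells when $m\delta>b-a$ or $n\delta>d-c$, and verifying that the upper and lower counts per cell really do telescope against the column structure of the $\delta$-mesh. Once this is set up cleanly, both inequalities fall out simultaneously from the same per-cell estimate, and no further analytic input is required.
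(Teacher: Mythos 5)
Your proposal is correct and follows essentially the same argument as the paper: you count, for each cell $A_{ij}$, the cubes in the column above it that meet $G_f$, bound this count between $R_f[A_{ij}]/\delta$ and $R_f[A_{ij}]/\delta + 2$ using continuity (so that $f(A_{ij})$ is an interval of length $R_f[A_{ij}]$), and sum over the $mn$ cells. The paper's proof is just a terser version of the same per-column estimate, so your additional care with connectedness and boundary cells only fills in details the authors left implicit.
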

\begin{proof}
The number of cubes of side length $\delta$ in the part above $A_{ij}$ that intersect the graph of $f$ is at least $\frac{R_f[A_{ij}]}{ \delta} $ and at most  $2+ \frac{R_f[A_{ij}]}{ \delta},$ using that $f$ is continuous. Summing over all such parts gives the desired bounds.
\end{proof}
The preceding result may be applied to functions satisfying a
H\"{o}lder condition to obtain the following corollary.
\begin{corollary}
Let $f:[a,b] \times [c,d] \rightarrow \mathbb{R}$ be a continuous function.
\begin{enumerate}[(i)]
\item  Suppose
\begin{equation} \label{HCeq1}
|f(t,u)-f(x,y)| \le c ~~||(t,u)-(x,y)||_2^s, ~~~~ \forall ~~(t,u),(x, y) \in [a,b] \times [c,d],
 \end{equation}
 where $c>0$ and $0 \le s \le 1.$ Then $\dim_H (G_f) \le \overline{\dim}_B (G_f) \le 3-s.$ The conclusion remains true if the H\"older condition in (\ref{HCeq1}) holds when $||(t,u)-(x,y)||_2 < \delta $ for some $\delta >0.$
 \item Suppose that there are numbers $c>0, \delta_0 >0$ and $0 \le s \le 1$ with the following property: for each $(x,y) \in [a,b] \times [c,d] $ and $ 0< \delta <\delta_0$ there exists $(t,u)$ such that $||(t,u)-(x,y)||_2 \le \delta$ and

     \begin{equation} \label{HCeq2}
     |f(t,u)-f(x,y)| \ge c \delta^s.
     \end{equation}
     Then $ \underline{\dim}_B (G_f) \ge 3 - s.$
\end{enumerate}
\end{corollary}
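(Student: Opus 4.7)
My plan is to derive both bounds directly from the counting estimate in Lemma \ref{BBVL2} by feeding the H\"older-type hypotheses into the double sum $\sum_{i,j} R_f[A_{ij}]$. For part (i), I would restrict attention to $\delta$ small enough that the (possibly local) H\"older estimate applies across each $\delta$-cell, whose diameter is $\sqrt{2}\,\delta$. Then $R_f[A_{ij}] \le c\,(\sqrt{2}\,\delta)^{s} = c\,2^{s/2}\,\delta^{s}$ for every cell, and Lemma \ref{BBVL2} gives
$$
N_{\delta}(G_f) \;\le\; 2mn + c\,2^{s/2}\, mn\,\delta^{s-1}.
$$
Using $mn \le \bigl(1+(b-a)/\delta\bigr)\bigl(1+(d-c)/\delta\bigr) = O(\delta^{-2})$ together with $s\le 1$, both summands are $O(\delta^{s-3})$; taking the limit superior of $\log N_{\delta}(G_f)/(-\log \delta)$ yields $\overline{\dim}_{B}(G_f) \le 3-s$, and $\dim_{H}(G_f) \le \overline{\dim}_{B}(G_f)$ is automatic.

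For part (ii), the target is the lower bound $\sum_{i,j} R_f[A_{ij}] \ge \text{const}\cdot mn\,\delta^{s}$, which together with the lower estimate of Lemma \ref{BBVL2} will produce $N_{\delta}(G_f) \ge \text{const}\cdot \delta^{s-3}$. The key device is a $3\times 3$ block argument. For any cell $A_{ij}$ and $\delta < \delta_{0}$, pick an arbitrary point $(x_{0},y_{0})\in A_{ij}$; hypothesis (\ref{HCeq2}) furnishes $(t_{0},u_{0})$ with $\|(t_{0},u_{0})-(x_{0},y_{0})\|_{2} \le \delta$ and $|f(t_{0},u_{0})-f(x_{0},y_{0})| \ge c\,\delta^{s}$. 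Since a displacement of at most $\delta$ cannot escape the $3\times 3$ block $B_{ij}$ of cells that contain and surround $A_{ij}$, both points lie in $B_{ij}$, whence $R_f[B_{ij}] \ge c\,\delta^{s}$. A continuity-based telescoping along the common edges of the nine sub-cells supplies the crucial estimate
$$
R_f[B_{ij}] \;\le\; \sum_{A_{kl} \subset B_{ij}} R_f[A_{kl}].
$$
Each cell $A_{kl}$ belongs to at most nine such blocks $B_{ij}$, so summing the latter inequality over $(i,j)$ and interchanging the order of summation delivers $9\sum_{k,l} R_f[A_{kl}] \ge mn\,c\,\delta^{s}$.

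Inserting this into the lower estimate of Lemma \ref{BBVL2} and using $mn \ge (b-a)(d-c)/\delta^{2}$ gives $N_{\delta}(G_f) \ge \tfrac{(b-a)(d-c)\,c}{9}\,\delta^{s-3}$; taking $\varliminf$ of $\log N_{\delta}(G_f)/(-\log \delta)$ then yields $\underline{\dim}_{B}(G_f) \ge 3-s$. I anticipate the main technical hurdle to be this $3\times 3$ block step: one must verify carefully that the continuity-based telescoping genuinely absorbs $R_f[B_{ij}]$ into the sum of cell oscillations, and handle the edge blocks near the boundary of $[a,b]\times[c,d]$ (which contain fewer cells but still contribute only a bounded multiplicative constant). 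Everything else reduces to the elementary asymptotics $mn=\Theta(\delta^{-2})$ and the logarithmic definition of box dimension.
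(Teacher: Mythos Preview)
Your argument for part (i) is essentially identical to the paper's: bound each cell oscillation by $c(\sqrt{2}\,\delta)^s$, plug into the upper estimate of Lemma \ref{BBVL2}, and take the $\limsup$.

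For part (ii) you take a genuinely different route. The paper proceeds more directly: it asserts a \emph{per-cell} lower bound $R_f[A_{ij}] \ge c(\sqrt{2}\,\delta)^s$ and feeds this straight into the lower estimate of Lemma \ref{BBVL2}. (The constant written there is not quite justified as stated, but the per-cell bound does hold in the form $R_f[A_{ij}] \ge c(\delta/2)^s$: apply the hypothesis at the \emph{center} of $A_{ij}$ with radius $\delta/2 < \delta_0$, so that the companion point $(t,u)$ is forced to stay in the cell.) Your $3\times 3$ block device is a valid alternative that addresses the same difficulty---the companion point possibly escaping the cell---by enlarging to $B_{ij}$ instead of shrinking the radius; the telescoping bound $R_f[B_{ij}] \le \sum_{A_{kl}\subset B_{ij}} R_f[A_{kl}]$ follows by chaining through shared edges of the closed sub-cells, and the factor $9$ from the block overlap is harmless. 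Your approach is more elaborate than necessary but has the minor advantage of invoking the hypothesis at the natural scale $\delta$ rather than $\delta/2$, and of not needing to single out a specific point in each cell. Either way one obtains $\sum_{i,j} R_f[A_{ij}] \gtrsim mn\,\delta^{s}$ and hence $\underline{\dim}_B(G_f) \ge 3-s$.
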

\begin{proof}
\begin{enumerate}[(i)]
\item Since $f$ satisfies the H\"older condition in (\ref{HCeq1}), we have $$R_f[A_{ij}] \le  c(\sqrt{2}\delta)^s .$$ Therefore from the previous lemma, we obtain $$N_{\delta}(G_f) \le 2mn + cmn {2}^\frac{s}{2} \delta^{s-1} .$$
The upper box dimension of $G_f$ can be estimated as $$ \overline{\lim}_{\delta \rightarrow 0} \frac{\log N_{\delta}(G_f)}{-\log \delta} \le \lim_{\delta \rightarrow 0}\dfrac{\log(2mn + cmn 2^{\frac{s}{2}} \delta^{s-1})}{- \log \delta},$$
which provides $$\overline{\lim}_{\delta \rightarrow 0} \frac{\log N_{\delta}(G_f)}{-\log \delta} \le 3-s .$$

\item On similar lines, (\ref{HCeq2}) implies that $R_f[A_{ij}] \ge c (\sqrt{2}\delta) ^s.$ The previous lemma now yields $ N_{\delta}(G_f) \ge  cmn 2^{\frac{s}{2}} \delta^{s-1}.$ On similar lines, we estimate the lower box dimension of $G_f$  to arrive at $\underline{\dim}_B(G_f) \ge 3- s$, completing the proof.
\end{enumerate}
\end{proof}
\begin{theorem}\label{BBVMT1}
If $f :[a,b] \times [c,d] \rightarrow \mathbb{R}$ is continuous and of bounded variation in the sense of Hahn (or Pierpont). Then $\dim_B(G_f) =2.$
\end{theorem}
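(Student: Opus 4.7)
The plan is to prove the matching inequalities $\underline{\dim}_B(G_f) \ge 2$ and $\overline{\dim}_B(G_f) \le 2$.

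The lower bound comes for free from the continuity of $f$ and costs nothing from the bounded variation hypothesis. Indeed, Lemma~\ref{BBVHDL1} yields $\dim_H(G_f) \ge 2$, and the general inequality $\dim_H(E) \le \underline{\dim}_B(E)$ for bounded $E$ gives $\underline{\dim}_B(G_f) \ge 2$.

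For the substantive upper bound I will work with the Hahn formulation of the hypothesis, justified by the Hahn--Pierpont equivalence stated in Section~\ref{BP}. Fix a positive integer $m$ and partition $[a,b]\times[c,d]$ into the $m^2$ congruent rectangles $A_r$ of dimensions $\tfrac{b-a}{m}\times\tfrac{d-c}{m}$. Let $\omega_r$ denote the oscillation of $f$ on $A_r$; by continuity and compactness of $A_r$ this coincides with $R_f[A_r]$. Hahn's assumption supplies a constant $M>0$, independent of $m$, with
\begin{equation*}
\sum_{r=1}^{m^{2}}\frac{\omega_r}{m}\le M,\qquad\text{equivalently}\qquad \sum_{r=1}^{m^{2}}\omega_r\le Mm.
\end{equation*}
Set $\delta_m:=\tfrac{\max\{b-a,\,d-c\}}{m}$, so that each $A_r$ is contained in a closed square of side $\delta_m$. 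Since $f$ is continuous, the graph $G_f$ restricted to $A_r$ lies inside a box of horizontal extent at most $\delta_m$ and vertical extent $\omega_r$, and is therefore covered by at most $\tfrac{\omega_r}{\delta_m}+2$ cubes of side $\delta_m$. Summing over $r$,
\begin{equation*}
N_{\delta_m}(G_f)\;\le\;\sum_{r=1}^{m^{2}}\Bigl(\frac{\omega_r}{\delta_m}+2\Bigr)\;\le\;\frac{Mm}{\delta_m}+2m^{2}\;=\;\Bigl(\frac{M}{\max\{b-a,\,d-c\}}+2\Bigr)m^{2},
\end{equation*}
which is $O(\delta_m^{-2})$. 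Hence $\tfrac{\log N_{\delta_m}(G_f)}{-\log\delta_m}\to 2$ along the sequence $\delta_m\downarrow 0$. For arbitrary $\delta>0$, the standard squeeze $\delta_{m+1}\le\delta<\delta_m$ combined with the monotonicity of $\delta\mapsto N_\delta$ and $\log\delta_m\sim\log\delta_{m+1}$ as $m\to\infty$ then upgrades this to $\overline{\dim}_B(G_f)\le 2$.

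Combining the two bounds forces $\dim_B(G_f)=2$. The step I expect to be most delicate is the elementary column count producing the bound $\tfrac{\omega_r}{\delta_m}+2$: it relies essentially on continuity to guarantee that the vertical spread of $G_f$ above $A_r$ is captured by $\omega_r$, and would fail for a function of bounded variation that is merely measurable; the remaining bookkeeping is routine.
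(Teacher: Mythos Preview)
Your proof is correct and follows essentially the same approach as the paper: bound the column count over each cell by $\omega_r/\delta+2$ and use the bounded-variation hypothesis to control $\sum_r\omega_r$. The only cosmetic differences are that the paper works via the Pierpont formulation and Lemma~\ref{BBVL2} for arbitrary $\delta$ (with an $m\times n$ grid), whereas you use the Hahn formulation on the $m\times m$ grid and then pass from the sequence $(\delta_m)$ to general $\delta$ by a squeeze.
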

\begin{proof}
We observe that $\underline{\dim}_B G_f  \ge \dim_H G_f \ge 2$ by Lemma \ref{BBVHDL1}.
Let $ 0 < \delta < \min\{b-a,d-c \}$ and $ \frac{b-a}{\delta} \le m \le 1+ \frac{b-a}{\delta }$ and $ \frac{d-c}{\delta} \le n \le 1+ \frac{d-c}{\delta }$ for some $m ,n \in \mathbb{N}.$ From Lemma \ref{BBVL2} we know that the number of $\delta-$cubes that intersect the graph of $f$ is
$$N_{\delta}(G_f) \le 2mn + \frac{1}{\delta} \sum_{j=1}^{n} \sum_{i=1}^{m} R_f[A_{ij}].$$
Since $f$ is of bounded variation in the sense of Pierpont, by definition, we have $ \sum_{j=1}^{n} \sum_{i=1}^{m}  \delta R_f[A_{ij}]$ is bounded for all $\delta$ where $ \delta_0 > \delta >0$ for some fixed $\delta_0 >0.$
 To calculate the box dimension of $G_f$, one deals with sufficiently  small $\delta -$cover of $G_f$ and hence we may assume that $ \sum_{j=1}^{n} \sum_{i=1}^{m} \delta R_f[A_{ij}]$ is bounded for all sufficiently small $\delta >0.$ That is, there exists $K >0$ such that $$\sum_{j=1}^{n} \sum_{i=1}^{m} R_f[A_{ij}] =  \sum_{j=1}^{n} \sum_{i=1}^{m} \omega_{ij} \le K.\frac{1}{\delta}$$ for sufficiently small $\delta>0$.
 Consequently,
$$ \overline{\lim}_{\delta \rightarrow 0} \frac{\log N_{\delta}(G_f)}{-\log \delta} \le \lim_{\delta \rightarrow 0}\frac{\log(2mn + \frac{1}{\delta}K\frac{1}{\delta})}{-\log \delta},$$
which on calculation produces $$ \overline{\dim}_B(G_f)= \overline{\lim}_{\delta \rightarrow 0} \frac{\log N_{\delta}(G_f)}{-\log \delta} \le 2 .$$
\end{proof}
\begin{remark}
Using the interconnection between the various notions of bounded variation of a continuous function (Cf. Theorem \ref{BBVIRT1}) and the previous theorem one can conclude that if $f :[a,b] \times [c,d] \rightarrow \mathbb{R}$ is continuous and  of bounded variation in the sense of Arzela or Hardy,  then $\dim_H(G_f)=\dim_B (G_f)=2.$
\end{remark}
In what follows, we shall provide some simple examples for a function $f: [a,b] \times [c,d] \to \mathbb{R}$ such that $\dim_H (G_f) = \dim_B(G_f)=2$, but $f$  is not of bounded variation.
\begin{example}
Define a function $f$ on $[0,1]^2$ by $f(x,y)=1,$ for $x,y \in \mathbb{Q}$ and $f(x,y)=0,$ otherwise. This function $f$ is of bounded variation in the sense of Tonelli but not in the sense of Pierpont \citep{James}. Note that $$2 \le \dim_H(G_f) \le    \underline{\dim}_B(G_f) \le  \overline{\dim}_B(G_f).$$ Next to bound
$\overline{\dim}_B(G_f)$, let us observe that
$$G_f= \big\{(x,y,1):(x,y) \in [0,1]^2\cap (\mathbb{Q} \times \mathbb{Q})\big\} \bigcup \big\{(x,y,0)(x,y) \in [0,1]^2\cap (\mathbb{Q}\times \mathbb{Q})^c\big\}:=A_1 \cup A_2$$
Since $\overline{\dim}_B(G_f) = \max_{i=1,2} \overline{\dim}_B(A_i) = \max_{i=1,2} \overline{\dim}_B(\overline{A_i})$, it follows that
$\dim_B(G_f)=\dim_H(G_f)= 2.$ The reader can compare this with Theorem \ref{BBVMT1}.
\end{example}
\begin{example}
Define a function $f$ on $[0,1]^2$ as follows
 \begin{equation*} f(x,y) =
       \begin{cases}
     1~~ \text{if}~~ x+y=1 \\
          0 ~~ \text{otherwise}.
      \end{cases}
   \end{equation*}
    This function  is discontinuous and it is of bounded variation in sense of Arzel\'a but not in sense of Fr\'echet \citep{James}. We write the graph of the function $f$ as
\begin{equation*}
   \begin{aligned}
    G_f = & \big\{(x,y,1): x,y \in [0,1] ~~\text{and} ~~x+y =1\big\}  \cup \big\{(x,y,0): (x,y) \in [0,1] ~~\text{and} ~~x+y < 1\big\}\\
    & \cup  \big\{(x,y,0): x,y \in [0,1] ~~\text{and} ~~x+y > 1\big\}\\
    &:=B_1 \cup B_2 \cup B_3.
             \end{aligned}
               \end{equation*}
  Let us recall that $2 \le \dim_H(G_f) \le \overline{\dim}_B(G_f) \le \overline{\dim}_B(G_f)$ and note that $\overline{\dim}_B(B_1)=1,$ $\overline{\dim}_B(B_2)=2$ and $\overline{\dim}_B(B_3)=2.$ Since $\overline{\dim}_B$ is finitely stable (see \citep{Fal}), we get $\overline{\dim}_B(G_f)=2.$ Therefore, $\dim_B(G_f)=\dim_H(G_f)=2.$
\end{example}
\begin{example}
Define a function $f$ on $[0,1]^2$ as follows $f(x,y)=0$ whenever $x < y ,$ otherwise $f(x,y)=1.$ This function is of bounded variation in sense of Tonelli but not in sense of Hardy \citep{James}. We write the graph of the function $f$ as $$ G_f=\{(x,y,0): x,y \in [0,1]  ~~ \text{and} ~~ x<y \} \cup \{(x,y,1): x,y \in [0,1] ~~~\text{and} ~~x \ge y \}:=C_1 \cup C_2$$ Note that $\dim_H(G_f)=2.$ We write $2= \dim_H(G_f) \le \underline{\dim}_B(G_f) \le \overline{\dim}_B(G_f).$
Furthermore, we have  $\overline{\dim}_B(C_1) =2$ and $\overline{\dim}_B(C_2)=2.$
Since $\overline{\dim}_B$ is finitely stable, we deduce that $\overline{\dim}_B(G_f)=2$ and hence that $\dim_B(G_f)=\dim_H(G_f)=2.$
\end{example}

\begin{example}
Define a function $f$ on $[0,1]^2$ as follows $f(x,y)=x \sin(\frac{1}{x})$ whenever $x \ne 0$, otherwise $f(x,y)=0.$ The function defined above is of bounded variation in the sense of Vitali but not in sense of Pierpont \citep{James}. Let us mention that if $g:[0,1] \to \mathbb{R}$ is continuous and
 has one unbounded variation point on $[0,1]$, then $\dim_B(G_g)=\dim_H(G_g)=1$: see, for instance,  \citep{Liang2}. Now using Remark \ref{BBVERM}, and that    the function
   \begin{equation*} g(x) =
       \begin{cases}
     x \sin \frac{1}{x}~~ \text{if}~~ x \neq 0\\
          0 ~~ \text{if}~~ x=0
      \end{cases}
   \end{equation*}
 has only one point of unbounded variation on $[0,1]$, we deduce  that  $\dim_B(G_f)=\dim_H(G_f)=2.$
\end{example}

\section{Dimension of graph of fractional integral of continuous function} \label{MS2}
In this section we consider $ 0 \le a <b <\infty$ and $ 0 \le c < d <\infty.$
\begin{theorem}

If $f$ is a bounded function on $[a,b] \times [c,d] $ and $\alpha >0, \beta >0$, then the Riemann-Liouville fractional integral $I^{(\alpha,\beta)}f$ is bounded.
\end{theorem}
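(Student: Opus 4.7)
The plan is to estimate $|\mathcal{I}^{(\alpha,\beta)}f(x,y)|$ pointwise by pulling the supremum of $|f|$ out of the double integral and then evaluating the remaining integral explicitly. Since $f$ is bounded on $[a,b]\times[c,d]$, set $M_f=\sup_{(s,t)\in[a,b]\times[c,d]}|f(s,t)|<\infty$.

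First I would apply the triangle inequality for integrals:
\begin{equation*}
|\mathcal{I}^{(\alpha,\beta)}f(x,y)|\le\frac{1}{\Gamma(\alpha)\Gamma(\beta)}\int_a^x\int_c^y(x-s)^{\alpha-1}(y-t)^{\beta-1}|f(s,t)|\,\mathrm{d}s\,\mathrm{d}t,
\end{equation*}
noting that the kernels $(x-s)^{\alpha-1}$ and $(y-t)^{\beta-1}$ are nonnegative on the domain of integration. Then I would bound $|f(s,t)|$ by $M_f$ and use Fubini to split the resulting double integral into a product of two one-dimensional integrals, each of which evaluates by the substitution $u=x-s$ (resp.\ $u=y-t$) to $(x-a)^\alpha/\alpha$ (resp.\ $(y-c)^\beta/\beta$).

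Putting these together and using the identities $\alpha\Gamma(\alpha)=\Gamma(\alpha+1)$ and $\beta\Gamma(\beta)=\Gamma(\beta+1)$ gives
\begin{equation*}
|\mathcal{I}^{(\alpha,\beta)}f(x,y)|\le\frac{M_f\,(x-a)^\alpha(y-c)^\beta}{\Gamma(\alpha+1)\Gamma(\beta+1)}\le\frac{M_f\,(b-a)^\alpha(d-c)^\beta}{\Gamma(\alpha+1)\Gamma(\beta+1)}
\end{equation*}
for every $(x,y)\in[a,b]\times[c,d]$, which furnishes a uniform bound and completes the proof. There is no real obstacle here; the only minor point to be careful about is ensuring integrability of the kernel near the endpoints $s=x$ and $t=y$ when $\alpha<1$ or $\beta<1$, but the integrals $\int_a^x(x-s)^{\alpha-1}\,\mathrm{d}s$ and $\int_c^y(y-t)^{\beta-1}\,\mathrm{d}t$ are convergent improper Riemann (or Lebesgue) integrals for all $\alpha,\beta>0$, so the estimate goes through without modification.
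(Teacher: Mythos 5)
Your argument is correct and is essentially identical to the paper's own proof: both bound $|f|$ by its supremum, evaluate the kernel integrals explicitly to get $(x-a)^\alpha(y-c)^\beta/(\alpha\beta)$, and absorb $\alpha\beta$ into the Gamma functions to obtain the uniform bound $M(b-a)^\alpha(d-c)^\beta/(\Gamma(\alpha+1)\Gamma(\beta+1))$. Your added remark about integrability of the kernel when $\alpha<1$ or $\beta<1$ is a sensible (if implicit in the paper) point of care.
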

\begin{proof}
Since $f$ is bounded, there exists $M>0$ such that $|f(x,y)|\le M,~~\forall~(x,y) \in [a,b] \times [c,d].$ For each fixed $(x,y) \in [a,b] \times [c,d],$ we have
\begin{equation*}
    \begin{aligned}
     |I^{(\alpha,\beta)}f(x,y)|=&~ \Big|\frac{1}{\Gamma (\alpha) \Gamma(\beta) } \int_a ^x \int_c ^y (x-s)^{\alpha-1} (y-t)^{\beta-1}f(s,t)~\mathrm{d}s~\mathrm{d}t\Big|\\
     \le &~ \frac{1}{\Gamma (\alpha) \Gamma(\beta)} \int_a ^x  \int_c ^y \big|(x-s)^{\alpha-1} (y-t)^{\beta-1}\big|~~|f(s,t)|\mathrm{d}s ~\mathrm{d}t\\
     \le&~ \frac{M}{\Gamma (\alpha) \Gamma(\beta)} \int_a ^x  \int_c ^y |(x-s)^{\alpha-1} (y-t)^{\beta-1}|\mathrm{d}s~ \mathrm{d}t\\
    \le &~  \frac{M}{ \Gamma (\alpha)\Gamma(\beta)} \frac{(x-a)^{\alpha} (y-c)^{\beta}}{ \alpha \beta}.
     \end{aligned}
               \end{equation*}
Consequently,
\begin{equation*}
    \begin{aligned}
     |I^{(\alpha,\beta)}f(x,y)| \le &~ \frac{M}{ \Gamma (\alpha+1)\Gamma (\beta+1)} (b-a)^{\alpha} (d-c)^{\beta}, ~~\forall~ (x,y) \in [a,b] \times [c,d],
     \end{aligned}
               \end{equation*}
  completing the proof.
\end{proof}
\begin{theorem}\label{BBVT9}
If $f$ is a continuous function on $[a,b] \times [c,d] $ and $\alpha >0$ and $\beta >0,$
 then $ I^{(\alpha,\beta)}f$ is continuous on $[a,b] \times [c,d].$
\end{theorem}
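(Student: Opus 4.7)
The plan is to reduce the problem to integration over a fixed domain via an affine substitution, so that all of the $(x,y)$-dependence moves into the integrand, and then to apply the dominated convergence theorem. Specifically, setting $s = a + (x-a)u$ and $t = c + (y-c)v$ in the defining integral yields
$$I^{(\alpha,\beta)}f(x,y) = \frac{(x-a)^\alpha (y-c)^\beta}{\Gamma(\alpha)\Gamma(\beta)} \int_0^1 \int_0^1 (1-u)^{\alpha-1}(1-v)^{\beta-1} f\bigl(a+(x-a)u,\, c+(y-c)v\bigr)\,\mathrm{d}u\,\mathrm{d}v.$$
Let $J(x,y)$ denote the double integral on the right; the strategy is to prove continuity of $J$ and of the prefactor separately.

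For the continuity of $J$, fix $(x_0,y_0) \in [a,b]\times[c,d]$ and let $(x_n,y_n) \to (x_0,y_0)$. For each $(u,v) \in [0,1)^2$, continuity of $f$ gives pointwise convergence of the integrand at $(x_n,y_n)$ to that at $(x_0,y_0)$. Since $f$ is continuous on a compact rectangle, $M := \sup_{[a,b]\times[c,d]}|f|$ is finite, so the integrand is dominated in absolute value by $M(1-u)^{\alpha-1}(1-v)^{\beta-1}$, which is integrable on $[0,1]^2$ precisely because $\alpha,\beta>0$. The dominated convergence theorem then gives $J(x_n,y_n) \to J(x_0,y_0)$, so $J$ is continuous on $[a,b]\times[c,d]$.

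Finally, since $a,c\ge 0$ and $\alpha,\beta>0$, the map $(x,y)\mapsto (x-a)^\alpha(y-c)^\beta/\bigl(\Gamma(\alpha)\Gamma(\beta)\bigr)$ is continuous on $[a,b]\times[c,d]$ (extending continuously to the boundary $x=a$ or $y=c$), and the product of continuous functions is continuous. This gives the claim.

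The main delicacy is the kernel singularity when $\alpha<1$ or $\beta<1$: in the original formulation, the integrand $(x-s)^{\alpha-1}(y-t)^{\beta-1}f(s,t)$ blows up as $s\uparrow x$ or $t\uparrow y$, and both the region of integration and the kernel vary with $(x,y)$, making a direct comparison of $I^{(\alpha,\beta)}f(x,y)$ and $I^{(\alpha,\beta)}f(x_0,y_0)$ awkward. The substitution elegantly separates these two sources of variation, turning the expression into an integral over a fixed compact domain with a singular but integrable majorant, which is exactly the hypothesis needed for dominated convergence; this is what makes the argument essentially a one-line application after the change of variables.
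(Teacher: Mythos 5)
Your proof is correct, but it takes a genuinely different route from the paper's. The paper argues directly with the difference $I^{(\alpha,\beta)}f(x+h,y+k)-I^{(\alpha,\beta)}f(x,y)$: it splits the integration region into pieces $I_1,\dots,I_4$ minus the original integral $I_5$, shifts variables in $I_4$ so that it merges with $I_5$ into a term containing $f(s+h,t+k)-f(s,t)$, bounds the boundary strips by explicit constants ($M_1hk+M_2k+M_3h$), and handles the remaining term via uniform continuity of $f$. You instead rescale with $s=a+(x-a)u$, $t=c+(y-c)v$, moving all $(x,y)$-dependence into an integrand over the fixed square $[0,1]^2$ with the singular but integrable majorant $M(1-u)^{\alpha-1}(1-v)^{\beta-1}$, and conclude by dominated convergence; since both sides of your identity vanish when $x=a$ or $y=c$, continuity on the whole closed rectangle follows at once. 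Your version is shorter and cleanly avoids the paper's constant bookkeeping, which is actually delicate near the lower edges when $\beta<1$ (the paper's $M_2$ contains $\max_{c\le t\le c+k}(y+k-t)^{\beta-1}$, which degenerates as $y\downarrow c$, and the argument is stated only for interior points $a<x$, $c<y$); the trade-off is that the paper's splitting is elementary (no measure-theoretic convergence theorem) and produces an explicit quantitative bound on the increment. One cosmetic remark: the condition $a,c\ge 0$ you invoke for the prefactor is not needed --- $(x-a)^\alpha(y-c)^\beta$ is continuous on the rectangle simply because $x-a\ge 0$ and $y-c\ge 0$ there.
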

\begin{proof}
The proof follows by standard lines as given below. Let $a <x \le x+h < b$ and $c <y \le y+k <d.$ Then,
\begin{equation}\label{BBVFIeq1}
    \begin{aligned}
     I^{(\alpha,\beta)}f(x+h,y+k) - I^{(\alpha,\beta)}f(x,y)=&~ \frac{1}{\Gamma \alpha .\Gamma \beta } \int_a ^{x+h} \int_c ^{y+k} (x+h-s)^{\alpha-1} (y+k-t)^{\beta-1}f(s,t)~\mathrm{d}s~\mathrm{d}t \\
     &- \frac{1}{\Gamma \alpha .\Gamma \beta } \int_a ^x \int_c ^y (x-s)^{\alpha-1} (y-t)^{\beta-1}f(s,t)~\mathrm{d}s~\mathrm{d}t.\\
     = I_1+ I_2 +I_3+I_4 -I_5,
     \end{aligned}
               \end{equation}
     where
  \begin{equation*}
    \begin{aligned}
     I_1 =&~ \frac{1}{\Gamma(\alpha) \Gamma(\beta)} \int_a ^{a+h} \int_c ^{c+k} (x+h-s)^{\alpha-1} (y+k-t)^{\beta-1}f(s,t)~\mathrm{d}s~\mathrm{d}t, \\
    I_2= &~ \frac{1}{\Gamma(\alpha)\Gamma (\beta) } \int_{a+h} ^{x+h} \int_c ^{c+k} (x+h-s)^{\alpha-1} (y+k-t)^{\beta-1}f(s,t)~\mathrm{d}s~\mathrm{d}t, \\
     I_3= &~ \frac{1}{\Gamma(\alpha)\Gamma(\beta) } \int_a ^{a+h} \int_{c+k} ^{y+k} (x+h-s)^{\alpha-1} (y+k-t)^{\beta-1}f(s,t)~\mathrm{d}s~\mathrm{d}t, \\
     I_4=&~ \frac{1}{\Gamma (\alpha)(\Gamma \beta) } \int_{a+h} ^{x+h} \int_{c+k} ^{y+k} (x+h-s)^{\alpha-1} (y+k-t)^{\beta-1}f(s,t)~\mathrm{d}s~\mathrm{d}t,\\
    I_5=&~ \frac{1}{\Gamma(\alpha)\Gamma(\beta)} \int_a ^x \int_c ^y (x-s)^{\alpha-1} (y-t)^{\beta-1}f(s,t)~\mathrm{d}s~\mathrm{d}t.\\
          \end{aligned}
               \end{equation*}
Using the change of variable $u= s-h$ and $v=t-k$ in the integral $I_4$, Eq. (\ref{BBVFIeq1}) yields
\begin{equation}
    \begin{aligned} \label{BBVEQ5}
     I^{(\alpha,\beta)}f(x+h,y+k) - I^{(\alpha,\beta)}f(x,y)
     =I_1+I_2+I_3+I_6, \end{aligned}
          \end{equation}
    where
        \begin{equation*}
            \begin{aligned}
          I_6= \frac{1}{\Gamma \alpha .\Gamma \beta } \int_a ^x \int_c ^y (x-s)^{\alpha-1} (y-t)^{\beta-1}[f(s+h,t+k)-f(s,t)]~\mathrm{d}s~ \mathrm{d}t.\\
          \end{aligned}
               \end{equation*}
Since $f$ is continuous  on $[a,b] \times [c,d]$, there exits $M>0$ such that $|f(s,t)| \le M$ for every $(s,t) \in [a,b] \times [c,d] $.
 Again using the fact that a real-valued continuous function on a closed bounded interval in $\mathbb{R}$ is bounded, for a suitable constant $M_1$, we obtain
 $$|I_1| \le M_1 hk.$$
 With $M_2= \max_{c\le t \le c+k} (y+k-t)^{\beta-1} \frac{M (b-a)^\alpha}{\Gamma (\alpha+1) \Gamma (\beta)} $, performing the required integration we have
 $$|I_2| \le M_2 k.$$
 Similarly, by defining suitable $M_3$ one can bound $I_3$ as
 $$|I_3| \le M_3 h.$$
 Since $f$ is uniformly continuous, for a given $\epsilon >0,$ there exits $ \delta_1 > 0 $ such that  for $||(s,t)-(z,w)||_2< \delta_1$
$$|f(s,t)-f(z,w)| < \frac{\epsilon \Gamma(\alpha+1) \Gamma (\beta+1)}{4(b-a)^\alpha (d-c)^\beta}.$$
 Consequently,  we gather that
 \begin{equation*}
        \begin{aligned}
         |I^{(\alpha,\beta)}f(x+h,y+k) - I^{(\alpha,\beta)}f(x,y)| \le  M_1 hk +M_2 k+M_3 h
                                 + \frac{\epsilon}{ 4},
         \end{aligned}
                   \end{equation*}
 from which the continuity of $I^{(\alpha,\beta)}f$  follows.
\end{proof}
In the upcoming lemma, $g$ is monotone stands for $\triangle_{10} g \ge 0$ and $ \triangle_{01} g \ge 0.$
\begin{lemma} \label{BBVL12}
Let $f:[a,b] \times [c,d] \rightarrow \mathbb{R} $ be of bounded variation in the sense of Arzel\'a. Then the following hold.
\begin{enumerate}[(i)]
\item If $f(a,c) \ge 0,$ then there exist monotone functions $f_1$ and $f_2$ such that $f=f_1 -f_2$ with $f_1(a,c) \ge 0$ and $f_2(a,c)=0.$
 \item If $f(a,c) < 0,$ then there exist monotone functions $f_1$ and $f_2$ such that $f=f_1 - f_2$ with $f_1(a,c) = 0$ and $f_2(a,c) >0.$
\end{enumerate}
\end{lemma}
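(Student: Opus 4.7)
The plan is to build on Theorem \ref{BBVET5}, which already yields a decomposition $f=g_1-g_2$ with $g_1,g_2$ bounded and satisfying $\triangle_{10}g_i\ge 0$, $\triangle_{01}g_i\ge 0$ (that is, monotone in the sense specified just before the lemma). The desired decomposition with the prescribed values at $(a,c)$ will then be obtained by shifting both $g_1$ and $g_2$ by the same additive constant; the choice of the constant is dictated by the sign of $f(a,c)$.

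The central observation is that monotonicity in this bivariate sense is invariant under adding a constant: for any real $C$, $\triangle_{10}(g_i-C)=\triangle_{10}g_i\ge 0$ and similarly for $\triangle_{01}$, because the constant cancels in every difference. Moreover $(g_1-C)-(g_2-C)=g_1-g_2=f$, so the equation $f=f_1-f_2$ is preserved by any common shift.

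For part (i), assume $f(a,c)\ge 0$. Set
\begin{equation*}
f_1=g_1-g_2(a,c),\qquad f_2=g_2-g_2(a,c).
\end{equation*}
Both are monotone by the previous paragraph, and $f=f_1-f_2$. Clearly $f_2(a,c)=0$, while $f_1(a,c)=g_1(a,c)-g_2(a,c)=f(a,c)\ge 0$, which gives exactly the two required boundary conditions.

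For part (ii), assume $f(a,c)<0$. Set
\begin{equation*}
f_1=g_1-g_1(a,c),\qquad f_2=g_2-g_1(a,c).
\end{equation*}
Again $f_1,f_2$ are monotone and $f=f_1-f_2$. We have $f_1(a,c)=0$ and $f_2(a,c)=g_2(a,c)-g_1(a,c)=-f(a,c)>0$, as required. I do not anticipate a genuine obstacle here: the lemma is essentially a normalization statement, and the only thing to verify carefully is that the monotonicity given by Theorem \ref{BBVET5} is preserved under the shift, which is immediate from the formula for the difference operators.
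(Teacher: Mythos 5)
Your proof is correct and is essentially the paper's own argument: both take the Arzel\'a decomposition $f=g_1-g_2$ from Theorem \ref{BBVET5} and shift both components by a common constant, and your choices $f_i=g_i-g_2(a,c)$ (case (i)) and $f_i=g_i-g_1(a,c)$ (case (ii)) coincide, after simplification, with the constants used in the paper.
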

\begin{proof}
Using Theorem \ref{BBVET5} we can write the function $f$ in the  form, $f=g_1-g_2,$ where $g_1$ and $g_2$ are monotone functions. Define functions $f_1$ and $f_2$ as $f_1(x,y)=g_1(x,y)+f(a,c)-g_1(a,c)$ and $f_2(x,y)=g_2(x,y)+f(a,c)-g_1(a,c). $ It is obvious that $f_1$ and $f_2$ both are still monotone and satisfy the required conditions. For the claim in item (2), we take $f_1(x,y)=g_1(x,y)-f(a,c)-g_2(a,c)$ and $f_2(x,y)=g_2(x,y)-f(a,c)-g_2(a,c)$.
\end{proof}
\begin{theorem}\label{BBVT10}

If $f$ is of bounded variation on $[a,b] \times [c,d]$ in the sense of Arzel\'a and $\alpha >0,\beta >0,$ then $I^{(\alpha,\beta)}f$ is of bounded variation on $[a,b] \times [c,d]$ in the same sense.
\end{theorem}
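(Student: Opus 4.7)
The plan is to use Theorem \ref{BBVET5}, refined via Lemma \ref{BBVL12}, to decompose $f = f_1 - f_2$ with $f_1, f_2$ monotone in the sense $\triangle_{10} f_i \geq 0$ and $\triangle_{01} f_i \geq 0$, and moreover nonnegative on $[a,b] \times [c,d]$. Indeed, Lemma \ref{BBVL12} lets us arrange $f_1(a,c), f_2(a,c) \geq 0$, and since each $f_i$ is nondecreasing in each argument separately, we have $f_i(x,y) \geq f_i(a,c) \geq 0$. By linearity of $I^{(\alpha,\beta)}$ and the triangle inequality applied to the Arzel\'a sum, it will then suffice to prove that $I^{(\alpha,\beta)} f_i$ is of bounded variation in the Arzel\'a sense for $i = 1,2$.

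The main technical claim I would establish is the following: if $g \geq 0$ satisfies $\triangle_{10} g \geq 0$ and $\triangle_{01} g \geq 0$ on $[a,b]\times[c,d]$, then $I^{(\alpha,\beta)} g$ enjoys both monotonicity properties as well. After the change of variables $u = x-s$, $v = y-t$, one has
$$I^{(\alpha,\beta)} g(x,y) = \frac{1}{\Gamma(\alpha)\Gamma(\beta)} \int_0^{x-a} \int_0^{y-c} u^{\alpha-1} v^{\beta-1} g(x-u, y-v) \, du \, dv.$$
For $h > 0$, splitting $\int_0^{x+h-a} = \int_0^{x-a} + \int_{x-a}^{x+h-a}$ and using $g(x+h-u, y-v) \geq g(x-u, y-v)$ on the first range (from $\triangle_{10} g \geq 0$) together with $g \geq 0$ on the second range yields $\triangle_{10} I^{(\alpha,\beta)} g(x,y) \geq 0$. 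The corresponding inequality $\triangle_{01} I^{(\alpha,\beta)} g(x,y) \geq 0$ follows by the identical argument performed in the $v$-variable.

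Once this preservation is in hand, the conclusion is quick. For any chain $a = x_0 \leq \cdots \leq x_m = b$ and $c = y_0 \leq \cdots \leq y_m = d$ and any $g$ with $\triangle_{10} g, \triangle_{01} g \geq 0$, each difference $g(x_{i+1}, y_{i+1}) - g(x_i, y_i)$ is nonnegative, so the Arzel\'a sum telescopes:
$$\sum_{i=0}^{m-1} \bigl|g(x_{i+1}, y_{i+1}) - g(x_i, y_i)\bigr| = g(b,d) - g(a,c).$$
Applied to $g = I^{(\alpha,\beta)} f_i$, which is bounded by the boundedness theorem opening this section, this bounds the Arzel\'a sum for $I^{(\alpha,\beta)} f_i$ uniformly in the chain. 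The triangle inequality then transfers the bound to $I^{(\alpha,\beta)} f = I^{(\alpha,\beta)} f_1 - I^{(\alpha,\beta)} f_2$.

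The hard part, in my view, is the monotonicity-preservation step in the second paragraph: the pointwise inequality $g \geq 0$ engineered via Lemma \ref{BBVL12} is genuinely needed there, because without it the boundary slab $\int_{x-a}^{x+h-a}$ could contribute with the sign opposite to that of the first piece and wreck the inequality. Everything else is bookkeeping using only linearity of $I^{(\alpha,\beta)}$, the telescoping for monotone functions, and boundedness of $I^{(\alpha,\beta)} g$ on the rectangle.
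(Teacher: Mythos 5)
Your proposal is correct and follows essentially the same route as the paper: decompose $f$ via Theorem \ref{BBVET5} and Lemma \ref{BBVL12} into nonnegative monotone parts and show that $I^{(\alpha,\beta)}$ preserves monotonicity, where your change of variables $u=x-s$, $v=y-t$ and boundary-slab splitting is just a reparametrization of the paper's shift substitution $s=x_2-x_1+u$, with nonnegativity of $f_i$ used at exactly the same point. The only cosmetic difference is that you finish by telescoping the Arzel\'a sum explicitly rather than re-invoking the characterization of Theorem \ref{BBVET5}.
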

\begin{proof}
Since $f$ is of bounded variation  in the Arzel\'a's sense, $f$ can be written as a difference of two monotone increasing functions. That is, $f(x,y)=f_1(x,y)-f_2(x,y), ~~~~\forall~~ (x,y) \in [a,b] \times [c,d],$ where $f_1$ and $f_2$ are monotone functions. We shall  show that $I^{(\alpha,\beta)}f$ is a difference of two monotone functions.
\begin{enumerate}[(i)]
  \item If $f(a,c) \ge 0,$ by the preceding lemma, we can choose $f_1(a,c) \ge 0$ and $f_2(a,c) = 0.$ Define functions $F_1$ and $F_2$ as follows:  $$F_1(x,y):= I^{(\alpha,\beta)}f_1(x,y), \quad F_2(x,y):= I^{(\alpha,\beta)}f_2(x,y).$$ Linearity of the fractional integral yields, $I^{(\alpha,\beta)}f(x,y)=F_1(x,y)-F_2(x,y).$ Hence it remains only to show that $F_1$ and $F_2$ are monotone functions. For this, let $a\le x_1 \le x_2 \le b$ and $ c \le y \le d.$
\begin{equation*}
    \begin{aligned}
     F_1(x_2,y)-F_1(x_1,y)=&~ \frac{1}{\Gamma (\alpha) \Gamma(\beta) } \int_a ^{x_2} \int_c ^{y} (x_2-s)^{\alpha-1} (y-t)^{\beta-1}f_1(s,t)~\mathrm{d}s~\mathrm{d}t \\
          &- \frac{1}{\Gamma(\alpha) \Gamma (\beta) } \int_a ^{x_1} \int_c ^y (x_1-s)^{\alpha-1} (y-t)^{\beta-1}f_1(s,t)~\mathrm{d}s~\mathrm{d}t.\\
          =&~ \frac{1}{\Gamma(\alpha)\Gamma(\beta) } \int_a ^{a+x_2-x_1} \int_c ^{y} (x_2-s)^{\alpha-1} (y-t)^{\beta-1}f_1(s,t)~\mathrm{d}s~\mathrm{d}t \\
          &+ \frac{1}{\Gamma(\alpha)\Gamma(\beta) } \int_{a+x_2-x_1} ^{x_2} \int_{c} ^{y} (x_2-s)^{\alpha-1} (y-t)^{\beta-1}f_1(s,t)~\mathrm{d}s~\mathrm{d}t \\
               &- \frac{1}{\Gamma(\alpha)\Gamma(\beta)} \int_a ^{x_1} \int_c ^y (x_1-s)^{\alpha-1} (y-t)^{\beta-1}f_1(s,t)~\mathrm{d}s~\mathrm{d}t.
      \end{aligned}
               \end{equation*}
Applying the change of variable $s =x_2-x_1 + u $ in the second integral above, we get
\begin{small}
\begin{equation*}
    \begin{aligned}
    F_1(x_2,y)-F_1(x_1,y)
              =& \frac{1}{\Gamma (\alpha) \Gamma(\beta) } \int_a ^{a+x_2-x_1} \int_c ^{y} (x_2-s)^{\alpha-1} (y-t)^{\beta-1}f_1(s,t)\mathrm{d}s~\mathrm{d}t \\
             &+ \frac{1}{\Gamma (\alpha)\Gamma(\beta) } \int_a ^{x_1} \int_c ^y (x_1-s)^{\alpha-1} (y-t)^{\beta-1}[f_1(s+x_2-x_1,t)-f_1(s,t)]\mathrm{d}s~\mathrm{d}t.
                   \end{aligned}
               \end{equation*}
               \end{small}
Since $ s+x_2-x_1 \ge s, $ $ f_1(a,c)\ge 0$ and $f_1$ is monotone,  all terms under the integration are non-negative. Hence, $F_1(x_2,y)-F_1(x_1,y) \ge 0,$ that is, $\triangle_{10}F_1 \ge 0.$ On similar lines, for  $ c \le y_1 \le y_2 \le d$ and $a \le x \le b,$ we have $\triangle_{01}F_1 \ge 0.$ Therefore, $F_1$ is monotone. In a similar way, one can show that $F_2$ is also a monotone function.
\item
If $f(a,c)<0,$ by Lemma \ref{BBVL12}, one can choose monotone functions $f_1$ and $f_2$ satisfying $f_1(a,c)=0$ and $f_2(a,c)>0.$
Following the proof in case (i), we have, $F_1$ and $F_2$ are monotone functions.
\end{enumerate}
\end{proof}
Next theorem shows that the box dimension and Hausdorff dimension of the Riemann-Liouville  fractional integral of a continuous function of bounded variation in the sense of Arzel\'a is $2$. The proof follows at once from Theorems \ref{BBVMT1}, \ref{BBVT9} and \ref{BBVT10}.
\begin{theorem}
If $f:[a,b] \times [c,d] \to \mathbb{R}$ is a continuous function of bounded variation in the Arzel\'a sense, then $\dim_B(G_{I^{(\alpha,\beta)}f})=\dim_H(G_{I^{(\alpha,\beta)}f})=2.$
\end{theorem}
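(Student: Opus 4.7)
The plan is to chain together the three results proved in this section. Let me abbreviate $g := I^{(\alpha,\beta)}f$. The objective reduces to verifying that $g$ lies in the class of continuous functions of bounded variation in the Pierpont (or Hahn) sense, so that Theorem \ref{BBVMT1} and the remark immediately following it apply directly to $g$.

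First I would invoke Theorem \ref{BBVT9} to conclude that $g$ is continuous on $[a,b]\times[c,d]$, which only uses continuity of $f$. Next I would apply Theorem \ref{BBVT10} to conclude that $g$ is of bounded variation on $[a,b]\times[c,d]$ in the sense of Arzel\'a, which uses the Arzel\'a bounded variation of $f$. At this stage $g \in \mathcal{A}\cap\mathcal{C}$.

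Next, the inclusion $\mathcal{A}\cap\mathcal{C}\subseteq\mathcal{P}\cap\mathcal{C}$ recorded in Theorem \ref{BBVIRT1} transfers $g$ into the Pierpont class of continuous functions, so Theorem \ref{BBVMT1} yields $\dim_B(G_g)=2$. To upgrade from box dimension to Hausdorff dimension, I would combine the general inequality $\dim_H(G_g)\le\underline{\dim}_B(G_g)\le\overline{\dim}_B(G_g)=2$ with the lower bound $\dim_H(G_g)\ge 2$ supplied by Lemma \ref{BBVHDL1} applied to the continuous function $g$ on the two-dimensional rectangle $[a,b]\times[c,d]$. Both dimensions then collapse to $2$. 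Equivalently, the remark immediately after Theorem \ref{BBVMT1} packages exactly this Arzel\'a conclusion.

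There is no substantive obstacle here: the statement is essentially a bookkeeping corollary of the three cited results. The only item that warrants care is making sure the lower bound $\dim_H(G_g)\ge 2$ is produced in parallel with the upper bound $\overline{\dim}_B(G_g)\le 2$, so that Hausdorff and box dimension coincide rather than one merely dominating the other; this is precisely the role played by Lemma \ref{BBVHDL1}, whose hypothesis (continuity of $g$ on a set of Hausdorff dimension $2$) is already secured in the first step.
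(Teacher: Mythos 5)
Your proposal is correct and follows essentially the same route as the paper: the authors likewise obtain the result by combining Theorem \ref{BBVT9} (continuity of $I^{(\alpha,\beta)}f$), Theorem \ref{BBVT10} (preservation of Arzel\'a bounded variation), and Theorem \ref{BBVMT1} together with the remark after it (which uses the inclusion $\mathcal{A}\cap\mathcal{C}\subseteq\mathcal{P}\cap\mathcal{C}$ of Theorem \ref{BBVIRT1} and the lower bound from Lemma \ref{BBVHDL1}).
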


\begin{remark}
Suppose that $g:[a,b] \rightarrow \mathbb{R}$ is continuous. We define a bivariate function $f :[a,b] \times [c,d] \rightarrow \mathbb{R}$ by $f(x,y)=g(x),$ which is continuous. We know that $$ I^{(\alpha,\beta)}f(x,y)= \frac{1}{\Gamma \alpha .\Gamma \beta } \int_a ^x \int_c ^y (x-s)^{\alpha-1} (y-t)^{\beta-1}f(s,t)\mathrm{d}s\mathrm{d}t .$$ For $\beta=1,$ we obtain $$ I^{(\alpha,\beta)}f(x,y)= \frac{1}{\Gamma \alpha  } \int_a ^x \int_c ^y (x-s)^{\alpha-1} f(s,t)\mathrm{d}s\mathrm{d}t .$$ By the definition of $f$, we get $$ I^{(\alpha,\beta)}f(x,y)=  \frac{y-c}{\Gamma \alpha  } \int_a ^x  (x-s)^{\alpha-1} g(s)\mathrm{d}s.$$ Finally, we have a relation between the Riemann-Liouville fractional integral of $g$ and mixed Riemann-Liouville fractional integral of $f$ as
 $$ I^{(\alpha,\beta)}f(x,y)= (y-c)~~I^{\alpha}g(x).$$ By Remark \ref{BBVERM}, $\overline{\dim}_B (G_{I^{(\alpha,\beta)}f}) \le \overline{\dim}_B(G_{I^{\alpha}g})+1$. From Theorem \ref{BBVET5}, it follows  that if $g$ is of bounded variation on $[a,b]$, then $f$ is so in the sense of Arzel\'a. From Liang \citep{Liang} it follows that $\dim_B(G_{I^{\alpha}g})=\dim_H(G_{I^{\alpha}g})=1$, and hence $\dim_B(G_{I^{(\alpha,\beta)}f})= \dim_H(G_{I^{(\alpha,\beta)}f}) =2.$ Our previous theorem provides the value of the Hausdorff and box dimension of the graph of the Riemann-Liouville fractional integral of a more general continuous function of bounded variation (in the sense of Arzel\'a).

\end{remark}

Let us conclude with a few remarks. The main theorems in this paper present  bivariate analogues of theorems in Liang \citep{Liang} with suitable interpretations for the notion of bounded variation. However, we should admit that it remains open  whether or not the results hold for  a bivariate continuous function of bounded variation according to the definitions other than those mentioned in our main results and remarks, for instance, if the function is of bounded variation  in the sense of Tonelli. The multivariate analogues can be considered for a future work.


\subsection*{Acknowledgments}
The first author thanks the University Grants
Commission (UGC), India for financial support in the form of a Junior Research Fellowship.

\end{document}